\definecolor{rblue}{rgb}{.255,.41,.884} % RoyalBlue of svgnames
\newcommand{\textdef}[1]{\textit{#1}\index{#1}}
\newtheorem{example}[theorem]{Example}
\newtheorem{lem}[theorem]{Lemma}
\newtheorem{thm}[theorem]{Theorem}
\newtheorem{rem}[theorem]{Remark}
\newtheorem{remark}[theorem]{Remark}
\def\R{\mathbb{R}}
\def\E{\mathbb{E}}
\def\Y{\mathbb{Y}}
\def\Ss{\mathcal{S}}
\newcommand{\Sn}{\mathcal{S}^n}
\newcommand{\Snp}{\mathcal{S}^n_+}
\newcommand{\Srp}{\mathcal{S}^r_+}
\newcommand{\Sr}{\mathcal{S}^r}
\newcommand{\EE}{{\mathcal E} }
\newcommand{\KK}{{\mathcal K} }
\newcommand{\PP}{{\mathcal P} }
\DeclareMathOperator{\trace}{{trace}}
\DeclareMathOperator{\Diag}{{Diag}}
\DeclareMathOperator{\range}{{rge}}
\DeclareMathOperator{\kernel}{{ker}}
\DeclareMathOperator{\face}{{face}}% smallest face containing a set
\DeclareMathOperator{\bnd}{{bnd}}% boundary of a set
\DeclareMathOperator{\cl}{{cl}}
\DeclareMathOperator{\embdim}{{embdim}}
\newcommand{\MM}{{\mathcal M}}
\DeclareMathOperator{\interior}{int}
\DeclareMathOperator{\linspan}{span}
\DeclareMathOperator{\ri}{{ri}}
\DeclareMathOperator{\dir}{{dir}}
\DeclareMathOperator{\tcone}{{tcone}}
\DeclareMathOperator{\dist}{{dist}}
\newcommand{\commentout}[1]{}
\newcommand{\co}[1]{}
\newcommand{\sym}[1]{{\mathcal S}^{#1}}
\newcommand{\psd}[1]{{\mathcal S}_+^{#1}}
\newcommand{\rad}[1]{\mathbb{R}^{#1}}
\renewcommand*\env@matrix[1][c]{\hskip -\arraycolsep
  \let\@ifnextchar\new@ifnextchar
  \array{*\c@MaxMatrixCols #1}}
\title{
Coordinate shadows of semi-definite
    and Euclidean distance matrices
%USING SIAM'S \LaTeX\ MACROS\thanks{This work was
%supported by the Society for Industrial and Applied
%Mathematics}
} 
\author{
\href{http://math.washington.edu/\~ddrusv}{Dmitriy Drusvyatskiy}\thanks{Department of Mathematics, University of Washington, Seattle, WA 98195-4350; \url{www.math.washington.edu/\~ddrusv}; Research supported by AFOSR.}
\and
\href{http://www.unc.edu/~pataki/}{G\'{a}bor Pataki}\thanks{Department of Statistics and Operations Research, University of
North Carolina at Chapel Hill; \url{www.unc.edu/\~pataki}.}
 \and 
\href{http://www.math.uwaterloo.ca/~hwolkowi/}{Henry
Wolkowicz}\thanks{Department of Combinatorics and Optimization,
University of Waterloo, Waterloo, Ontario, Canada N2L 3G1; \url{www.math.uwaterloo.ca/\~hwolkowi}; Research supported by NSERC and by AFOSR.}
}
\begin{document}
\maketitle
\slugger{mms}{xxxx}{xx}{x}{x--x}%slugger should be set to mms, siap, sicomp, sicon, sidma, sima, simax, sinum, siopt, sisc, or sirev

\begin{abstract}
We consider the projected semi-definite and Euclidean distance cones onto a subset of the matrix entries. These two sets are precisely the input data defining feasible semi-definite and
Euclidean distance completion problems. We classify 
when these sets are closed, and use the boundary structure of these two sets to elucidate the Krislock-Wolkowicz facial reduction algorithm. In particular, we show that under a chordality assumption, the ``minimal cones'' of these problems admit combinatorial characterizations. As a byproduct, we record a striking relationship between the complexity of the general facial reduction algorithm (singularity degree) and facial exposedness of conic images under a linear mapping.
\end{abstract}

\begin{keywords}
Matrix completion, semidefinite programming (SDP), 
Euclidean distance matrices, facial reduction, Slater condition, projection, 
closedness
\end{keywords}

\begin{AMS}
90C22, 90C46, 52A99   
\end{AMS}

\pagestyle{myheadings}
\thispagestyle{plain}
\markboth{COORDINATE SHADOWS}{SEMI-DEFINITE AND EUCLIDEAN DISTANCE
MATRICES}
%\markboth{TEX PRODUCTION}{USING SIAM'S \LaTeX\ MACROS}

\section{Introduction}
To motivate the discussion, consider an undirected graph $G$ with vertex set $V=\{1,\ldots, n\}$ and edge set $E\subset \{ij: i\leq j\}$. The classical {\em semi-definite (PSD) completion problem} asks whether given a data vector $a$ indexed by $E$, there exists an $n\times n$ positive semi-definite matrix $X$ completing $a$, meaning $X_{ij}=a_{ij}$ for all $ij\in E$. Similarly, the {\em Euclidean distance (EDM) completion problem} asks whether given such a data vector, there exists a Euclidean distance matrix completing it. For a survey of these two problems, see for example \cite{MR1607310, MR1778240, Floudas:2001aa, Netzer:2012aa}. The semi-definite and Euclidean distance completion problems are often mentioned in the same light due to a number of parallel results; see e.g. \cite{MR1491595}. 
Here, we consider a related construction: projections of the PSD cone $\Snp$ and the EDM cone $\EE^n$ onto matrix entries indexed by $E$. These ``coordinate shadows'', denoted by $\PP(\Snp)$ and $\PP(\EE^n)$, respectively, appear naturally: they are precisely the sets of data vectors that render the corresponding completion problems feasible. 
We note that these sets are interesting types of ``spectrahedral shadows'' --- 
an area of intensive research in recent years. For a representative sample 
of recent papers on spectrahedral shadows, we refer  to \cite{Netzer:2012aa, MR3062007, MR2515796, MR2533752,MR1404832} and references therein.

In this paper, our goal is twofold: (1) we will highlight the geometry of the two sets $\PP(\Snp)$ and $\PP(\EE^n)$, and (2) illustrate how such geometric considerations yield a much simplified and transparent analysis of an EDM completion algorithm proposed in \cite{Nathan_Henry}.
To this end, we begin by asking a basic question: 
\smallskip
\begin{center}
Under what conditions are the coordinate shadows $\PP(\Snp)$ and  $\PP(\EE^n)$ closed? 
\end{center}
\smallskip
This question sits in a broader context still of deciding if a linear image of a closed convex set is itself closed ---  a thoroughly studied topic due to its fundamental connection to constraint qualifications and strong duality in convex optimization; see e.g. \cite{con:70, MR85e:90057, Duff:56, Pataki:07, GPat:11} and references therein.  
In contrast to the general setting, a complete answer to this question in our circumstances is easy to obtain. 
An elementary argument\footnote{\label{foot:proofs}The elementary proofs
of Theorem~\ref{thm:maincompl_fixed} and \ref{thm:close_EDM_fixed} were
suggested to us by an anonymous referee, for which we are very grateful.
Our original reasoning, based on more general considerations, appear in
Subsection~\ref{sec:subsec_orig}.} shows that $\PP(\EE^n)$ is always
closed, whereas $\PP(\Snp)$ is closed if, and only if, the set of vertices attached to self-loops $L=\{i\in V: ii\in E\}$ is disconnected from its complement $L^c$ (Theorems~\ref{thm:maincompl_fixed}, \ref{thm:close_EDM_fixed}). Moreover, whenever there is an edge joining $L$ and $L^c$, one can with ease exhibit vectors lying in the closure of $\PP(\Snp)$,  but not in the set $\PP(\Snp)$ itself, thereby certifying that $\PP(\Snp)$ is not closed.

To illustrate the algorithmic significance of the coordinate shadows $\PP(\Snp)$ and $\PP(\EE^n)$, consider first the feasible region of the PSD completion problem:
$$\{X\in\Snp: X_{ij}=a_{ij} \textrm{ for }ij\in E \}.$$%\qquad \qquad\qquad \{X\in\EE^n: X_{ij}=a_{ij}\textrm{ for }ij\in E\}.$$ 
For this set to be non-empty, the data vector $a\in\R^{E}$ must be a partial PSD matrix, meaning all of its principal submatrices are positive semi-definite. This, however, does not alone guarantee the inclusion $a\in\PP(\Snp)$, unless the restriction of $G$ to $L$ is chordal and $L$ is disconnected from $L^c$ (Theorem~\ref{thm:psd_comp},\cite[Theorem 7]{GrJoSaWo:84}). On the other hand, the authors of \cite{Nathan_Henry} noticed that even if the feasible set is nonempty, the Slater condition (i.e. existence of a positive definite completion) will often fail: small perturbations to any specified principal submatrix of $a$ having deficient rank can yield the semi-definite completion problem infeasible. In other words, in this case the partial matrix $a$ lies on the boundary of $\PP(\Snp)$  --- the focus of this paper.  
An entirely analogous situation occurs for EDM completions $$\{X\in\EE^n: X_{ij}=a_{ij} \textrm{ for }ij\in E \},$$
with the rank of each principal submatrix of $a\in\R^E$ replaced by its ``embedding dimension''. 
In \cite{Nathan_Henry}, the authors propose a preprocessing strategy utilizing the cliques in the graph $G$ to systematically decrease the size of the EDM completion problem. Roughly speaking, the authors use each clique to find a face of the EDM cone containing the entire feasible region, and then iteratively intersect such faces. The numerical results in \cite{Nathan_Henry} were impressive. In the current work, we provide a much simplified and transparent geometric argument behind their algorithmic idea, with the boundary of $\PP(\EE^n)$ playing a key role. As a result, $(a)$ we put their techniques in a broader setting unifying the PSD and EDM cases, and $(b)$ the techniques developed here naturally lead to a robust variant of the method for {\em noisy (inexact)} EDM completion problems \cite{sensor_pre} -- a more realistic setting. In particular, we show that when $G$ is chordal and all cliques are considered, the preprocessing technique discovers the minimal face of $\EE^n$ (respectively $\Snp$) containing the feasible region; see Theorems~\ref{thm:cliquesuffPSD} and \ref{thm:fin_term}. This in part explains the observed success of the method \cite{Nathan_Henry}. %We hope that the general principles developed can equally well be used for other highly structured problems.
Thus in contrast to general semi-definite programming, the minimal face of the PSD cone containing the feasible region of the PSD completion problem under a chordality assumption (one of the simplest semi-definite programming problems) admits a purely combinatorial description.

As a byproduct, we record a striking relationship between the complexity of the general facial reduction algorithm (singularity degree) and facial exposedness of conic images under a linear mapping; see Theorem~\ref{thm:face_red}.
To the best of our knowledge, this basic relationship has either gone unnoticed or has mostly been forgotten.

The outline of the manuscript is as follows.
In Section \ref{sect:prel} we record basic results on convex
geometry and PSD and EDM completions.
In Section~\ref{sect:closedness}, we consider when the coordinate shadows $\PP(\Snp)$ and $\PP(\EE^n)$ are closed, while in Section \ref{sect:bndry} we discuss the aforementioned clique facial reduction strategy.

\section{Preliminaries}
\label{sect:prel}
\subsection{Basic elements of convex geometry}
We begin with some notation, following closely the classical 
text~\cite{con:70}. Consider a Euclidean space $\E$ with the inner product 
$\langle \cdot,\cdot \rangle$. The adjoint of a linear mapping 
$\MM\colon \E\to\Y$, between two Euclidean spaces $\E$ and $\Y$, 
is written as $\MM^{*}$, while the range and kernel of 
$\MM$ are denoted by $\range \MM$ and $\ker \MM$, respectively.
We denote the closure, boundary, interior, and relative interior of a set $Q$ in $\E$ by $\cl Q$, $\bnd Q$, $\interior Q$, and $\ri Q$, respectively.
Consider a convex cone $C$ in $\E$. 
The linear span and the orthogonal complement of the linear span of $C$ 
will be denoted by $\linspan C$ and $C^{\perp}$, respectively.
For a vector $v$, we let $v^\perp:=\{v\}^\perp$.
We associate with $C$ the \emph{nonnegative polar} cone
\[
C^*=\{y\in\E: \langle y,x\rangle \geq 0~ \textrm{ for all } x \in C\}.
\]
The second polar $(C^*)^{*}$ coincides with the original $C$
if, and only if, $C$ is closed.
A convex subset $F\subseteq C$ is a \textdef{face of $C$}, denoted 
$F \unlhd C$, if $F$ contains any line segment in $C$ whose relative interior intersects $F$. 
The \textdef{minimal face} containing a set $S\subseteq C$, denoted
$\face(S,C)$, is the intersection of all faces of $C$ containing $S$. When $S$ is itself a convex set, then $\face(S,C)$ is the smallest face of $C$ intersecting the relative interior of $S$.
A face $F$ of $C$ is an \textdef{exposed face} when there exists a 
vector $v\in C^*$ satisfying $F=C\cap \, v^{\perp}$. In this case, we say that $v$ {\em exposes} $F$.
The cone $C$ is \textdef{facially exposed} when all faces of $C$ are exposed.
In particular, the cones of positive semi-definite and Euclidean distance matrices, which we will focus on shortly, are facially exposed.
With any face $F\unlhd C$, we associate a face of the polar $C^*$, 
called the \textdef{conjugate face}
$F^{\triangle}:= C^*\cap F^\perp$. Equivalently, $F^{\triangle}$ is the 
face of $C^*$ exposed by any point $x\in \ri F$, that is 
$F^{\triangle}:= C^*\cap \,x^\perp$.
Thus, in particular, conjugate faces are always exposed. 
Not surprisingly then equality $(F^{\triangle})^{\triangle}=F$ holds if,
and only if, $F\unlhd C$ is exposed.

\subsection{Semi-definite and Euclidean distance matrices}\label{subsec:PSD_EDM}
We will focus on two particular realizations of the Euclidean space $\E$:  
the $n$-dimensional vector space $\R^n$ with a fixed basis and the induced 
\textdef{dot-product} $\langle \cdot,\cdot \rangle$ and the vector space of
$n\times n$ real symmetric matrices $\Sn$ with the 
\textdef{trace inner product} $\langle A,B\rangle :=\trace AB$.
%For ease of notation, we 
%let $t(n)$ stand for the dimension of  $\Sn$, that is 
%$t(n):=\dim(\Sn) =\frac{n(n+1)}{2}$. 
The symbols $\R_{+}$ and $\R_{++}$ 
%\index{$t(n):=\dim(\Sn) =\frac{n(n+1)}{2}$.}
will stand for the non-negative orthant and its interior in $\R^n$, 
while $\Sn_{+}$ and $\Sn_{++}$ will stand for the cones of 
positive semi-definite and positive definite matrices in $\Sn$
(or \textdef{PSD} and \textdef{PD} for short), respectively. We let $e\in\R^n$ be the vector of all ones and for any vector $v\in\R^n$, the symbol $\Diag v$ will denote the $n\times n$ diagonal matrix with $v$ on the diagonal.

It is well-known that all faces of $\Sn_+$ are convex cones that can be expressed as
$$F=\left\{ U\begin{bmatrix} A & 0 \\ 0 & 0 \end{bmatrix}U^T: A\in\Sr_+
\right\},$$
for some orthogonal matrix $U$ and some integer $r=0,1,\ldots,n$.
Such a
face can equivalently be written as $F=\{X\in\Snp: \range X\subset\range
\overline{U}\}$, where $\overline{U}$ is formed from the first $r$
columns of $U$.
The conjugate face of such a face $F$ is then 
$$F^{\triangle}=\left\{ U\begin{bmatrix} 0 & 0 \\ 0 & A \end{bmatrix}
U^T: A\in\mathcal{S}^{n-r}_+ \right\}.$$
For any convex set $Q\subset\Snp$, the cone $\face(Q,\Snp)$ coincides with $\face(X,\Snp)$ where $X$ is any maximal rank matrix in $Q$.

A matrix $D\in\Sn$ is a {\em Euclidean distance matrix} (or EDM for short) if there 
exist $n$ points $p_i$ (for $i=1,\ldots, n$) in some Euclidean space 
$\R^k$ satisfying $D_{ij}=\|p_i-p_j\|^2$, for all indices $i,j$. These points are then said to {\em realize} $D$. 
The smallest integer $k$ for which this realization of $D$ by $n$ points is 
possible is the \textdef{embedding dimension} of $D$ and will be denoted by
$\embdim D$. 
We let $\mathcal{E}^n$ be the set of $n\times n$ Euclidean distance matrices.
There is a close relationship between PSD and 
EDM matrices. Indeed $\mathcal{E}^n$ is a closed convex cone that is linearly 
isomorphic to $\mathcal{S}^{n-1}_+$. 
To state this precisely, consider the mapping
$$\KK : \Sn \rightarrow \Sn$$ 
defined by  
\begin{equation}
\label{eq:defK}
\KK(X)_{ij}:=X_{ii}+X_{jj}-2X_{ij}.
\end{equation}
Then the adjoint 
$\KK^*\colon \Sn\to\Sn$ is given by 
\begin{equation*}%\label{eqn:adj}
\KK^*(D)=2(\Diag(De)-D)
\end{equation*}
and the equations
\begin{equation} 
\label{rangeK*}
\range \KK = \Ss_H, \qquad \range \KK^*=\Ss_c
\end{equation}
hold, where
\begin{equation}
\label{eq:centholl}
\Ss_c:= \{X \in \Sn: Xe=0\}; \qquad
\Ss_H:= \{D \in \Sn: \diag(D)=0\},
\end{equation}
are the \textdef{centered} and  
 \textdef{hollow matrices},
respectively. It is known that $\KK$ maps $\Snp$ onto $\EE^n$, and moreover the restricted mapping
\begin{equation}
\label{eq:isomorph}
\KK : \Ss_c \rightarrow \Ss_H \text{  is a linear isomorphism carrying } \Ss_c\cap\Snp \textrm{ onto }\EE^n. 
\end{equation}
In turn, it is easy to see that $\Ss_c\cap\Snp$ is a face of $\Snp$ isomorphic to $\Ss^{n-1}_+$; see the discussion after Lemma~\ref{lem:face_iso} for more details. 
%More specifically for any $n\times n$ orthogonal matrix $ %\begin{bmatrix}
%\frac{1}{\sqrt n}e & U  \cr
%\end{bmatrix}$,
%we have the representation
%\begin{equation*}
%\mathcal{S}_c\cap \Snp= U\mathcal{S}^{n-1}_+U
%\end{equation*}
%Consequently, with respect to the ambient space %$\mathcal{S}_c$, the cone $\mathcal{S}_c\cap \Snp$ is self-dual and for any face $F\unlhd \mathcal{S}^{n-1}_+$ we have
%$$ UFU^T\unlhd \mathcal{S}_c\cap \Snp \qquad \textrm{ and } \qquad (UFU^T)^{\triangle} =UF^{\triangle}U^T.$$
%We discuss this further at the end of Section~\ref{sect:bndry}. 
These and other related results have appeared in a number of publications; see for example \cite{homwolkA:04,MR1366579,hwlt91,MR2166851, MR97h:15032,MR2549047,MR2890931,MR2653818,MR2357790}.

\subsection{Semi-definite and Euclidean distance completions}
The focus of the current work is on the PSD and EDM completion problems, see e.g.,~\cite[Chapter 49]{MR2279160}.
Throughout the rest of the manuscript, we fix an undirected graph $G=(V,E)$, with a 
vertex set $V=\{1,\ldots,n\}$ and an edge set $E\subset \{ij: 1\leq i\leq j\leq n\}$. Observe that we allow self-loops. These loops will play an important role in what follows, and hence we define $L$ to be the set of all vertices $i$ satisfying $ii\in E$, that is those vertices that are attached to a loop.

Any vector $a\in\R^{E}$ is called a {\em partial matrix}.
Define now the projection map $\mathcal{P}: \Sn \rightarrow \R^{E}$ by setting 
\[
  \PP(A) =  (A_{ij})_{ij\in E },
\]
that is $\mathcal{P}(A)$ is the vector of all the entries of $A$ indexed by $E$.
The adjoint map $\PP^*:\R^{E} \rightarrow \Sn$  is found by setting
\[
\left(\PP^*(y)\right)_{ij}= \left\{
\begin{array}{ll}
   y_{ij}, & \text{if } ij\in E\\
   0, & \text{otherwise},
\end{array}
  \right.
\]
for indices $i\leq j$.
Define also the \textdef{Laplacian operator} $\mathcal{L}\colon \R^{E}\to \Sn$
by setting 
$$\mathcal{L}(a):=\frac{1}{2}(\PP\circ  \KK)^{*}(a)=\Diag(\PP^*(a)e)-\PP^*(a).$$
Consider a partial matrix $a\in\R^{E}$ whose components are all strictly
positive. Classically then the Laplacian matrix $\mathcal{L}(a)$ is
positive semi-definite and moreover the kernel of $\mathcal{L}(a)$ is
only determined by the connectivity of the graph $G$; see for example
~\cite{BrualRys:91},\cite[Chapter 47]{MR2279160}. Consequently all
partial matrices with strictly positive weights define the same minimal face of
the positive semi-definite cone. In particular, when $G$ is connected,
we have the equalities  
\begin{equation}\label{eqn:lap}
\kernel \mathcal{L}(a)=\linspan\{e\} \qquad\textrm{ and }\qquad
\face(\mathcal{L}(a),\Snp)=\mathcal{S}_c \cap \Snp.
\end{equation}

A symmetric matrix $A\in\Sn$ is a \textdef{completion} of a partial
matrix $a\in \R^{E}$ if it satisfies $\mathcal{P}(A)=a$. 
We say that a completion $A\in \Sn$ of a partial matrix $a\in\R^{E}$ is a 
{\em PSD completion} if $A$ is a PSD matrix. %{\em Positive definite completions} (or PD completions for short) are defined analogously. 
Thus the image $\PP(\Sn_+)$ is the set of all partial matrices 
that are PSD completable. 
A partial matrix $a\in \R^{E}$ is a 
\textdef{partial PSD matrix} if all existing 
principal submatrices, defined by $a$, are PSD matrices. 
Finally we call $G$ itself a {\em PSD completable graph} if every partial PSD matrix $a\in\R^E$ is completable to a PSD matrix. \textdef{PD completions}, \textdef{partial PD matrices}, and \textdef{PD completable graphs}
are defined similarly.

We call a graph \emph{chordal} if any cycle of four or more 
nodes (vertices) has a chord, 
i.e.,~an edge exists joining any two nodes that are not 
adjacent in the cycle. Before we proceed, a few comments on
completability are in order. In \cite[Proposition 1]{GrJoSaWo:84}, the
authors claim that $G$ is PSD completable (PD respectively)  if, and
only if, the graph induced on $L$ by $G$ is PSD completable (PD respectively). In light of this, the authors then reduce all of their arguments to this induced subgraph. It is easy to see that the statement above does not hold for PSD completability (but is indeed valid for PD completability). 
 Consider for example the partial PSD matrix
 $\begin{bmatrix}
 0 &1\\
 1 & ?
 \end{bmatrix}$
 which is clearly not PSD completable. 
Taking this into account, the correct statement of their main result \cite[Theorem 7]{GrJoSaWo:84} is as follows. See also the discussion in \cite{Laurent:00}. 
\smallskip

\begin{theorem}[PSD completable matrices \& chordal graphs]\label{thm:psd_comp}{\hfill \\}
The following are true.
\begin{enumerate}
\item The graph $G$ is PD completable if, and only if, the graph induced by $G$ on $L$ is chordal.
\item The graph $G$ is PSD completable if, and only, if the graph induced by $G$ on $L$ is chordal and $L$ is disconnected from $L^c$.
\end{enumerate}
\end{theorem}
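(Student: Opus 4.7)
The plan is to reduce both items to the classical Grone--Johnson--S\'a--Wolkowicz theorem applied to $G|_L$, which asserts that a graph with a self-loop at every vertex is PD (respectively PSD) completable if and only if it is chordal. Once this is granted, the remaining issue is how to handle the vertices in $L^c$ whose diagonals are unspecified, together with the cross entries between $L$ and $L^c$.

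For the forward direction of item~1, I would take a partial PD matrix $a\in\R^E$, invoke the classical theorem on the chordal induced subgraph $G|_L$ to obtain a PD completion $B$ of the principal part $a|_L$, and then define a completion $X\in\Sn$ of $a$ by placing $B$ in the $L$-block, setting $X_{ii}=M$ for each $i\in L^c$, filling every unspecified off-diagonal entry with zero, and copying $a_{ij}$ on every remaining edge. A Schur complement computation shows that $X\succ 0$ for all sufficiently large $M$: the inverse of the $L^c$-block tends to zero as $M\to\infty$, so the Schur complement in the $L$-block converges to $B\succ 0$, and diagonal dominance makes the $L^c$-block itself PD. For necessity, if $G|_L$ is not chordal the classical theorem produces a partial PD matrix $b$ on $G|_L$ with no PD completion; I would extend $b$ to a partial matrix on $G$ by setting every entry meeting $L^c$ to zero, observe that every specified principal submatrix is indexed by a subset of $L$ (hence coincides with a specified submatrix of $b$ and so is PD), and note that any PD completion of the extension would restrict on $L$ to a PD completion of $b$, a contradiction.

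For item~2, chordality of $G|_L$ is necessary by the same reduction with ``PSD'' in place of ``PD''. Necessity of $L$ being disconnected from $L^c$ is the small counterexample flagged in the discussion preceding the theorem: for any edge $ij\in E$ with $i\in L$ and $j\notin L$, define $a$ by $a_{ii}=0$, $a_{ij}=1$, and $a_{kl}=0$ on every other edge. Every specified principal submatrix is indexed by a subset of $L$ (so cannot contain $j$) and has only zero entries, hence is PSD, so $a$ is partial PSD; but any PSD completion $A$ with $A_{ii}=0$ must have $A_{ij}=0$ by the standard fact that a zero diagonal entry of a PSD matrix forces the corresponding row and column to vanish, contradicting $a_{ij}=1$. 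For sufficiency under the two hypotheses, the absence of edges between $L$ and $L^c$ means no specified entry of $a$ crosses between the two vertex sets, so I would complete $a|_L$ to a PSD matrix via the classical chordal PSD completion theorem, complete the $L^c$-block by the same diagonal-boosting scheme used in item~1 (which in fact yields a PD block), and set the cross block to zero; the resulting block-diagonal matrix is PSD.

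The main obstacle I anticipate is the careful bookkeeping of which principal submatrices are actually ``specified'' in the partial-PSD counterexample for item~2 --- this is precisely the definitional point that Theorem~\ref{thm:psd_comp} corrects in the misstatement of \cite[Proposition 1]{GrJoSaWo:84}, as explained in the paragraph preceding the theorem. Once this subtlety is handled, the remaining work is routine: the classical chordal completion theorem supplies the completions on $L$, and Schur complements together with diagonal dominance handle everything on $L^c$.
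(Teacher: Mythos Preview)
The paper does not actually prove Theorem~\ref{thm:psd_comp}; it is recorded there as the corrected version of \cite[Theorem~7]{GrJoSaWo:84}, with a pointer to \cite{Laurent:00}, and no proof is given. Your proposal therefore supplies an argument that the paper omits by citation.

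The proof sketch is correct. The crucial bookkeeping point --- that every specified principal submatrix of a partial matrix on $G$ is indexed by a clique contained entirely in $L$, so the partial-PD and partial-PSD conditions on $a$ are determined entirely by the restriction $a|_{E_L}$ --- is exactly what reduces both items to the classical GJSW chordal-completion theorem applied to $G|_L$; your Schur-complement and diagonal-boosting constructions then handle the $L^c$ block cleanly. Your $2\times 2$ counterexample for the necessity of disconnection in item~2 is precisely the one the paper displays in the paragraph preceding the theorem, and your explanation of why it is nonetheless a partial PSD matrix (the vertex $j\in L^c$ cannot belong to any specified principal submatrix) pinpoints the definitional subtlety that the paper flags in correcting \cite[Proposition~1]{GrJoSaWo:84}.
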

\smallskip

With regard to EDMs, we will always assume 
$L=\emptyset$ for the simple reason that the diagonal of 
an EDM is always fixed at zero.
With this in mind, we say that a completion $A\in \Sn$ of a partial 
matrix $a\in\R^{E}$ is an \textdef{EDM
completion} if $A$ is an EDM. 
Thus the image $\PP(\mathcal{E}^n)$ (or equivalently 
$\mathcal{L}^*(\Snp)$) is the set of all partial matrices 
that are EDM completable.
We say that a partial matrix $a\in\R^{E}$ is a \textdef{partial EDM} if any existing principal submatrix, defined by $a$, is an EDM. %\footnote{Note that a clique in the graph defines a principal
Finally we say that $G$ is an EDM completable graph if any partial EDM is completable to an EDM.
The following theorem is analogous to Theorem~\ref{thm:psd_comp}. For a proof, see~\cite{MR1321802}.
\smallskip
\begin{thm}[Euclidean distance completability \& chordal graphs]
\label{thm:compledm}{\hfill \\ } 
%\hfill {\\}
The graph $G$ is EDM completable if, and only if, $G$ is chordal.
\end{thm}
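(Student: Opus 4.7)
The argument splits into necessity and sufficiency. For necessity, I would argue the contrapositive: assuming $G$ contains an induced chordless cycle on vertices $v_1,v_2,\ldots,v_k$ with $k\geq 4$, I construct a partial EDM on $G$ that admits no EDM completion. On the cycle edges, set $d_{v_iv_{i+1}}=1$ for $i=1,\ldots,k-1$ and $d_{v_kv_1}=k^2$. Any EDM completion of the full matrix would, when restricted to $\{v_1,\ldots,v_k\}$, satisfy the triangle inequality $\sqrt{d_{v_kv_1}}\leq \sum_{i=1}^{k-1}\sqrt{d_{v_iv_{i+1}}}=k-1$, which is violated by $\sqrt{k^2}=k$. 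The remaining edges of $G$ must be assigned values consistent with the partial-EDM property (each clique of $G$ restricts to an EDM). Since the induced cycle is chordless, every clique of $G$ contains at most one cycle edge; thus the remaining values can be inherited from an ambient realization of $V$ in which $v_1,\ldots,v_k$ are placed at the chosen cycle distances together with sufficiently large separations from the off-cycle vertices to satisfy each clique's triangle inequalities.

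For sufficiency, I would induct on $|V|$ using the clique-tree characterization of chordal graphs: $G$ admits a clique tree $T$ whose nodes are the maximal cliques and whose edges satisfy the running intersection property. Pick a leaf clique $K$ of $T$ with neighbor $K'$, put $S:=K\cap K'$, and set $G':=G\setminus (K\setminus K')$, which remains chordal. By the inductive hypothesis, the restriction of the partial EDM to $G'$ extends to an EDM $D'$ on $V(G')$, realized by points $\{p_i\}_{i\in V(G')}\subset\R^d$; separately, the restriction to the clique $K$ is itself a full EDM realized by $\{q_i\}_{i\in K}\subset\R^{d'}$. The restrictions $\{p_i\}_{i\in S}$ and $\{q_i\}_{i\in S}$ realize the same EDM on $S$, so they differ by a rigid motion; working in a common ambient $\R^{d+d'}$ and applying that motion to $\{q_i\}$, we arrange $p_i=q_i$ for all $i\in S$. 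The merged configuration $\{p_i\}_{i\in V(G')}\cup\{q_i\}_{i\in K\setminus K'}$ realizes an EDM on $V$, and the running intersection property guarantees that no edge of $G$ joins $K\setminus K'$ with $V(G')\setminus S$, so this EDM agrees with the original partial matrix on every edge of $E$.

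The main obstacle is the rigidity step underlying the gluing: one needs that any two realizations of an EDM on $S$ are congruent. This is precisely where the linear isomorphism $\KK:\Ss_c\cap\Snp\to\EE^n$ from \eqref{eq:isomorph} enters — realizations of an EDM $D$ correspond to Gram decompositions of the unique preimage $\KK^{-1}(D)\in\Ss_c\cap\Snp$, and two Gram decompositions of a given PSD matrix differ only by an orthogonal factor, i.e., by a rigid motion. With this rigidity in hand, the inductive gluing proceeds transparently, and the chordality hypothesis enters exactly through the running intersection property that isolates each leaf clique; that structural feature is precisely what fails in the presence of a chordless cycle, mirroring the obstruction built in the necessity direction.
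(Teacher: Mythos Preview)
The paper does not actually prove this theorem; it simply states it and refers the reader to \cite{MR1321802} (Bakonyi--Johnson). So there is no ``paper's proof'' to compare against beyond noting that your outline follows the same clique-tree gluing strategy that is standard in that reference for the sufficiency direction, and your sufficiency argument is correct.

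Your necessity argument, however, has a genuine gap. You want to extend the uncompletable cycle data to a partial EDM on all of $G$, and you propose to do this by taking values ``inherited from an ambient realization of $V$ in which $v_1,\ldots,v_k$ are placed at the chosen cycle distances.'' But no such realization exists: the whole point of your choice $d_{v_iv_{i+1}}=1$, $d_{v_kv_1}=k^2$ is that these distances violate the triangle inequality and hence cannot be realized by any point configuration. So you cannot appeal to a global embedding of $V$ to define the remaining entries.

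The repair is easy and uses exactly the observation you already made, that every clique of $G$ contains at most two cycle vertices (and if two, they are consecutive on the cycle). Define the remaining edge values directly: set $d_{ij}=0$ whenever neither endpoint lies on the cycle, and $d_{v_\ell w}=M^2$ for every edge joining a cycle vertex $v_\ell$ to an off-cycle vertex $w$, with $M$ large. Then each clique $K$ is realizable on its own: collapse the off-cycle vertices of $K$ to a single point and place the at most two cycle vertices of $K$ at distance $M$ from that point and at the prescribed cycle distance from each other (feasible once $4M^2$ exceeds $k^2$). This makes $a$ a partial EDM on $G$, while any EDM completion restricted to $\{v_1,\ldots,v_k\}$ still violates $\sqrt{d_{v_kv_1}}\le\sum_{i=1}^{k-1}\sqrt{d_{v_iv_{i+1}}}$.
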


\section{Closedness of the projected PSD and EDM cones}
\label{sect:closedness}
We begin this section by characterizing when the projection of the 
PSD cone $\Snp$ onto some subentries is closed. 
To illustrate, consider the simplest setting $n=2$, namely 
\[
\mathcal{S}^2_+ =
 \left\{\begin{bmatrix} x  & y \\
y & z \end{bmatrix} : x\geq 0, z\geq 0, xz\geq y^2\right\}.
\]
Abusing notation slightly, one can easily verify:
$$\PP_{z}(\mathcal{S}^2_+)=\R_+ ,\qquad \PP_{y}(\mathcal{S}^2_+)=\R, \qquad \PP_{x,z}(\mathcal{S}^2_+)=\R^2_+.$$
Clearly all of these projected sets are closed. Projecting $\mathcal{S}^2_+$ onto a single row, on the other hand, yields a set that is not closed:
$$\PP_{x,y}(\mathcal{S}^2_+)=\PP_{z,y}(\mathcal{S}^2_+)=\{(0,0)\}\cup(\R_{++}\times\R).$$
In this case, the graph $G$ has two vertices and two 
%***3*** 
edges, and in particular, there is an edge joining $L$ with $L^c$. As we will now see, this connectivity property is the only obstacle to $\PP(\Snp)$ being closed. The elementary proof of the following two theorems was suggested to us by one of the anonymous referees, for which we are very grateful. Our original arguments, based on more general principles, now appear in Subsection~\ref{sec:subsec_orig}, and may be of an independent interest. 
\smallskip

\begin{thm}[Closedness of the projected PSD cone] 
	\label{thm:maincompl_fixed} 
	%\hfill {\\}
	The projected set $\PP(\psd{n})$ is closed if, and only if, the vertices in $L$ are disconnected from those in the complement $L^c$.
	Moreover, if the latter condition fails, then 
	%***6*** 
	for any edge $i^*j^*\in E$ joining  a vertex in $L$ with a
	vertex in $L^c$, any partial matrix $a\in\R^{E}$ satisfying
	\begin{align*}
	a_{i^*j^*} \neq 0 \quad&\textrm{ and }\quad 
	a_{ij}=0 \textrm{   for all } ~ij\in E\cap (L\times L),
	\end{align*}
	lies in $\big(\cl \PP(\Sn_+)\big)\setminus \PP(\Sn_+)$.
\end{thm}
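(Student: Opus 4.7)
The plan is to prove the two directions separately, with the closure construction in the negative direction being the main work.

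For the sufficient direction, suppose no edge joins $L$ to $L^c$. Then $E$ splits as $E_L\sqcup E_{L^c}$ inside $L\times L$ and $L^c\times L^c$ respectively, and any PSD completion can be made block-diagonal without changing its image under $\PP$ (zeroing the $L\times L^c$ block of a PSD matrix leaves the two diagonal blocks, which are still PSD). Hence $\PP(\Snp)$ factors as a product $\Pi_L\times\Pi_{L^c}$ of the two coordinate shadows of $\mathcal{S}^{|L|}_+$ and $\mathcal{S}^{|L^c|}_+$. Since no vertex of $L^c$ carries a loop, no diagonal is specified in $E_{L^c}$, so $\Pi_{L^c}=\R^{E_{L^c}}$ (set unspecified entries to zero and add a large multiple of the identity, Gershgorin), trivially closed. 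And $\Pi_L$ is closed by compactness: every $i\in L$ has $ii\in E$, so PSD preimages of a convergent sequence have bounded diagonals, hence bounded off-diagonals via $|X_{ij}|\le\sqrt{X_{ii}X_{jj}}$, and a subsequential limit delivers a PSD completion of the limit.

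For the necessary direction, pick $a$ as in the statement. That $a\notin \PP(\Snp)$ is immediate: any PSD completion would have $X_{i^*i^*}=a_{i^*i^*}=0$ (because $i^*i^*\in E\cap(L\times L)$), forcing the entire $i^*$-row of $X$ to vanish and contradicting $X_{i^*j^*}=a_{i^*j^*}\neq 0$.

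The main step is to show $a\in\cl\PP(\Snp)$. I would use one rank-one perturbation per bridge edge together with a fixed PSD background supported on the $L^c$-block. For each bridge edge $ij\in E\cap(L\times L^c)$ with $i\in L$, set $v^{(ij),k}:=\epsilon_k e_i+(a_{ij}/\epsilon_k)e_j$, and let $Y\in\Snp$ be supported on the principal $L^c$-block with $Y_{jj'}=a_{jj'}$ for every $jj'\in E\cap(L^c\times L^c)$ (again by the Gershgorin trick). Define
\[
X^k \;=\; \sum_{ij\in E\cap(L\times L^c)} v^{(ij),k}\bigl(v^{(ij),k}\bigr)^T + Y,
\]
which is PSD as a sum of PSD matrices. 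A routine four-case check of $\PP(X^k)$ yields $\PP(X^k)\to a$: diagonals at $i\in L$ scale like $d_i\epsilon_k^2\to 0$, where $d_i$ is the bridge-degree of $i$; off-diagonal entries with both endpoints in $L$ vanish identically, since no bridge vector has two $L$-coordinates; each bridge entry is hit exactly by $\epsilon_k\cdot(a_{ij}/\epsilon_k)=a_{ij}$; and $L^c\times L^c$ entries are supplied by $Y$, since distinct $j,j'\in L^c$ cannot both lie in the two-point support of any bridge vector. Sending $\epsilon_k\to 0^+$ finishes the proof. The main obstacle is engineering $X^k$ to respect simultaneously the small $L$-diagonals, the vanishing $L\times L$ off-diagonals, the exact bridge values, and the prescribed $L^c$-block entries; the rank-one-plus-background device is exactly what makes the four constraints compatible.
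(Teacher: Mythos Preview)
Your proof is correct. The forward direction is essentially the paper's argument, phrased as an explicit product decomposition $\PP(\Snp)=\Pi_L\times\Pi_{L^c}$ rather than handling one convergent sequence directly; the ingredients (bounded diagonals on $L$ force a bounded subsequence; no diagonals on $L^c$ so Gershgorin gives everything) are identical.

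The closure construction in the reverse direction is where you genuinely diverge. The paper simply sets
\[
X \;=\; \PP^*(a) + \begin{bmatrix}\epsilon I & 0\\ 0 & \lambda I\end{bmatrix}.
\]
Since $a$ vanishes on $E\cap(L\times L)$, the $L$-block of $X$ is $\epsilon I$, and a single Schur-complement step shows that for each fixed $\epsilon>0$ one can choose $\lambda$ large enough to make $X\succeq 0$; meanwhile $\PP(X)$ agrees with $a$ on every edge except the $L$-diagonals, where it equals $\epsilon$. Letting $\epsilon\downarrow 0$ finishes. Your rank-one-plus-background device reaches the same conclusion but with more moving parts: you must verify four separate cases on $\PP(X^k)$ and check that distinct bridge vectors do not interfere with one another or with $Y$. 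The paper's construction buys simplicity and a one-line PSD certificate; your construction has the minor virtue of making the role of each bridge edge explicit and producing a low-rank approximant (rank at most $|L^c|$ plus the number of bridge edges), which is not needed here but is a pleasant byproduct.
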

\begin{proof}	
Without loss of generality, assume $L=\{1,\ldots,r\}$ for some integer $r\geq 0$.	
Suppose first that the vertices in $L$ are disconnected from those in the complement $L^c$. Let $a_i\in\R^E$  be a sequence in $\PP(\psd{n})$ converging to a partial matrix $a\in \R^E$. We may now write $a_i=\PP(X_i)$ for some matrices $X_i\in \psd{n}$. Denoting by $X_i[L]$ the restriction of $X_i$ to the $L\times L$ block, we deduce that the diagonal elements of $X_i[L]$ are bounded and therefore
the matrices $X_i[L]$ are themselves bounded.
Hence there exists a subsequence of $X_i[L]$ converging to some PSD matrix $X_L$. Let $Y\in \mathcal{S}^{|L^c|}$ now be any completion of the restriction of $a$ to $L^c$. Observe that for sufficiently large values $\lambda$ the matrix $Y+\lambda I$ is positive definite and hence $\begin{bmatrix} X_L  & 0 \\
	0 & Y+\lambda I \end{bmatrix}$
is a positive semi-definite completion of $a$.

Conversely, suppose that $L$ is not disconnected from $L^c$ and consider 
the vertices $i^*,j^*$ and a matrix $a$ specified in the statement of the theorem. Since the block 
$\begin{bmatrix} 0  & a_{i^*j^*} \\
a_{i^*j^*} & ? \end{bmatrix}$
is not PSD completable, clearly $a$ is not PSD completable. To see the inclusion $a\in \cl\PP(\Sn_+)$, consider the matrix $X:=\PP^*(a)+ \begin{bmatrix} \epsilon I  & 0 \\
	0 & \lambda I \end{bmatrix}$.
Using Schur's complement, we deduce that for any fixed $\epsilon$ there
exists a sufficiently large $\lambda$ such that $X$ is positive
definite. On the other hand, clearly $\PP(X)$ converges to $a$ as $\epsilon$ tends to zero. This completes the proof.
\end{proof}

\smallskip

%\begin{rem}
%\label{rem:rankconstrclosure}
%{\rm Rank constrained positive semidefinite completion problems 
%are part of the more general low rank completion question 
%e.g.,~\cite{DBLP:journals/corr/HardtMRW14}. They also arise in 
%many contexts in SDP e.g.,~\cite{MR3156779}.
%More specifically, for the algorithm studied in
% \cite{MR2166543} a sequence of rank constrained iterates $X_i=R_iR_i^T$ are 
%generated where for sparse data only the elements of %$X_i$ corresponding to the 
%specific sparsity pattern of the data are evaluated, 
%i.e.,~only $a_i=\PP(X_i)$ is evaluated. It is proved that accumulation
%points are optimal for the original SDP.
%It is therefore of interest to know that if the sequence (or a
%subsequence) $a_i$ converges, then the limit is PSD completable,
%i.e.,~whether the projection of rank restricted PSD %matrices is a closed
%set. In the case that $L^c=\emptyset$ the above
%Theorem \ref{thm:maincompl_fixed} extends to 
%projections of the rank restricted cone 
%$\psdr{n}:=\{X\in \Snp: \rank X \leq r\}$. The proof is immediate.
%}
%\end{rem}
%One can show that
%Theorem \ref{thm:maincompl_fixed} holds when the definition of the 
%projected set $\PP(\psd{n})$ is changed to 
%\[
%\PP(\psdr{n}):=\PP\left(\{X\in \Snp: \rank X \leq r\}\right).
%\]
%\end{rem}

\smallskip

\smallskip
We next consider projections of the EDM cone.
 
\smallskip
\begin{theorem}[Closedness of the projected EDM cone]\label{thm:close_EDM_fixed} 
	\hfill {\\ }
	The projected image $\PP(\mathcal{E}^n)$ is always closed.
\end{theorem}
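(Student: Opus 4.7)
The plan is to fix an arbitrary convergent sequence $a_k \to a$ with $a_k \in \PP(\mathcal{E}^n)$ and construct an EDM completion of the limit by a compactness argument on point configurations. Since $L = \emptyset$ for EDMs, every edge $ij\in E$ is off-diagonal. Write each $a_k = \PP(D_k)$ for an EDM $D_k$ and fix a realization of $D_k$ by points $p_1^{(k)},\ldots,p_n^{(k)}\in\R^{n-1}$, so that $\|p_i^{(k)}-p_j^{(k)}\|^2 = (a_k)_{ij}$ for all $ij\in E$. The goal is then to pass to subsequential limits of the $p_i^{(k)}$ to produce realizing points for $a$.

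Next I would decompose $V$ into the connected components $C_1,\ldots,C_m$ of the graph $G=(V,E)$, pick a representative vertex $r_\alpha$ in each $C_\alpha$, and translate the realizing points of $C_\alpha$ so that $p_{r_\alpha}^{(k)}=0$; such a translation preserves all intra-component squared distances. For any other vertex $i\in C_\alpha$, choose a path $r_\alpha=v_0,v_1,\ldots,v_s=i$ in $G$. The triangle inequality then gives
\[
\|p_i^{(k)}\|\;\leq\;\sum_{t=1}^{s}\|p_{v_t}^{(k)}-p_{v_{t-1}}^{(k)}\|\;=\;\sum_{t=1}^{s}\sqrt{(a_k)_{v_{t-1}v_t}},
\]
and the right-hand side is bounded because each sequence $(a_k)_{v_{t-1}v_t}$ converges. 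Thus the entire configuration $\{p_i^{(k)}: i\in V\}$ is bounded, and after extracting a subsequence we may assume $p_i^{(k)}\to p_i$ for every $i\in V$. Continuity of the squared distance then yields $\|p_i-p_j\|^2 = a_{ij}$ for every edge $ij\in E$ that lies inside a single component.

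The remaining subtlety is that the limit configurations from different components were obtained by independent translations, so their cross-component pairwise distances carry no intended meaning. This is harmless precisely because $E$ contains no cross-component edges: I would embed each $C_\alpha$'s limit points into its own coordinate block of a sufficiently large Euclidean space (padding with zeros in the slots reserved for the other components), which preserves intra-component distances while making the components pairwise orthogonal. The resulting global configuration realizes some EDM $D\in\mathcal{E}^n$, and by construction $\PP(D)=a$, showing $a\in\PP(\mathcal{E}^n)$ and hence closedness. The main obstacle is the boundedness step — handled by the path-triangle bound above — whereas the decoupling across disconnected components is the structural reason why EDM projections are always closed, sharply in contrast with the PSD situation of Theorem~\ref{thm:maincompl_fixed}.
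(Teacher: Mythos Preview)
Your proof is correct and follows essentially the same route as the paper's: anchor a point in each connected component at the origin, use connectivity plus convergence of the edge entries to bound the realizing configurations, and extract a subsequential limit. The only cosmetic difference is that the paper first observes $\PP(\mathcal{E}^n)=\PP_{E_1}(\mathcal{E}^{|V_1|})\times\cdots\times\PP_{E_l}(\mathcal{E}^{|V_l|})$ and thereby reduces to a single connected graph, whereas you carry the components through to the end and reassemble them via an orthogonal-block embedding; both devices accomplish the same decoupling.
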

\begin{proof}
First, we can assume without loss of generality that the graph $G$ is connected. To see this, let $G_i=(V_i, E_i)$ for $i=1,\ldots,l$ be the connected components of $G$.
Then one can easily verify that $\PP(\mathcal{E}^n)$ coincides with the Cartesian product $P_{E_1}(\mathcal{E}^{|V_1|})\times \ldots\times P_{E_l}(\mathcal{E}^{|V_l|})$. Thus if each image $\PP_{E_i}(\mathcal{E}^{|V_i|})$ is closed, then so is the product $\PP(\mathcal{E}^n)$.
We may therefore assume that $G$ is connected. Now suppose that for a sequence $D_i\in \EE^n$ the vectors $a_i=\PP(D_i)$ converge to some vector $a\in\R^E$. Let $x^i_1,\ldots,x^i_n$ be the point realizing the matrices $D_i$. Translating the points, we may assume that one of the points is the origin. Since $G$ is connected, all the points $x^i_j$ are bounded in norm. Passing to a subsequence, we obtain a collection of points realizing the matrix $a$.
\end{proof}

%\begin{rem}
%\label{rem:edmext}
%{\rm  As in Remark \ref{rem:rankconstrclosure}, We can
%extend Theorem \ref{thm:close_EDM_fixed} to include a restriction on the
%embedding dimension, $\embdim D \leq r$. The proof is immediate.}
%\end{rem}

\subsection{Alternate proofs}\label{sec:subsec_orig}
Theorems~\ref{thm:maincompl_fixed} and \ref{thm:close_EDM_fixed} are part of a broader theme.
Indeed, a central (and classical) question in convex analysis is when a linear
image of a closed convex cone is itself closed. In a recent paper~\cite{Pataki:07}, the
author showed that there is a convenient
characterization for ``nice cones'' --- those cones $C$ for which $C^*+F^\perp$ is closed for all faces 
$F \unlhd C$ \cite{BW80,Pataki:07}. %Following~\cite{BW80,Pataki:07}, we say that $C$ is a
%\textdef{nice cone} if the set $C^*+F^\perp$ is closed for all faces 
%$F \unlhd C$. 
Reassuringly, most cones which we can efficiently 
optimize over are nice; see the discussion in~\cite{Pataki:07}.
For example, the cones of positive semi-definite and 
Euclidean distance matrices are nice. In this subsection, we show how the results of Theorems~\ref{thm:maincompl_fixed}, \ref{thm:close_EDM_fixed} can be recovered from the more general perspective; originally, the content of the aforementioned results were noticed exactly in this way. (We note that the results 
in~\cite{Pataki:07} provide {\em necessary} conditions for the linear image of 
any closed convex cone to be closed.)

To proceed, we need two standard notions:
the  
\textdef{cone of feasible directions} and the {\em tangent cone} of a convex cone $C$ at one of its points $x$, respectively, are the sets
\begin{align*}
\dir(x,C)  &:=  \left\{ 
v  : x + \epsilon v \in C \textrm{ for some } \epsilon >0 \right\},\\
\tcone(x,C) &:= \cl \dir (x,C).
\end{align*}
%and the \textdef{tangent cone} of $C$ at $x$ is 
Both of the cones above can conveniently be described in terms of the minimal face $F:=\face(x,C)$ as follows (for details, see~\cite[Lemma~1]{GPat:11}):
\begin{equation*}
\label{eq:fourconessums}
\begin{array}{rclrcl}
\dir(x,C) = C+\linspan F \qquad\textrm{ and }\qquad \tcone(x,C) = (F^{\triangle})^*. 
\end{array}
\end{equation*}
The following theorem, originating in 
\cite[Theorem 1.1, Corollary 3.1]{Pataki:07} and~\cite[Theorem 3]{GPat:11}, provides a general framework for checking image closedness.

\smallskip
\begin{thm}
	[Image closedness of nice cones]
	\label{pataki-cl}
	Let $\MM:\E\rightarrow \Y$ be a linear transformation 
	between two Euclidean spaces $\E$ and $\Y$, and let $C\subseteq \Y$ be a
	nice, closed convex cone. Consider a point $x\in\ri(C\cap \,\range
	\MM)$. Then the following two statements are equivalent.
	\begin{enumerate}
		\item The image $\MM^*C^*$ is closed.
		\item\label{claim:2} The implication
		\begin{equation}
		\label{eqn:unlin} 
		v\in \tcone(x,C) \cap\, \range \MM \quad \Longrightarrow \quad v\in \dir(x,C)  \quad
		\textrm{holds}.
		\end{equation}
	\end{enumerate}
	Moreover, suppose that implication \eqref{eqn:unlin} fails and choose an arbitrary vector 
	$v\in \left(\tcone(x,C) \cap \range \MM\right)\setminus \dir(x,C)$. 
	Then for any point
	\begin{equation}
	\label{eq:av}
	a \in \left(\face(x,C)\right)^{\perp} \quad 
	\textrm{ satisfying } \quad\langle
	a,v\rangle <0,
	\end{equation}
	the point $\MM^*a$ lies in $(\cl \MM^*C^*)\setminus
	\MM^*C^*$, thereby certifying that $\MM^*C^*$ is not closed.
\end{thm}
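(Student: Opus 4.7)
My plan is to prove the ``moreover'' statement first, as it simultaneously yields $(1)\Rightarrow(2)$ by contraposition; the reverse $(2)\Rightarrow(1)$ then follows from a short dual computation that leans on the niceness of $C$. Write $F:=\face(x,C)$ throughout. Two preliminary observations drive the argument. (a) Since $x\in\ri(C\cap\range\MM)$, any $y\in C\cap\range\MM$ lies on a segment through $x$ and hence in the face $F$, so $C\cap\range\MM\subseteq F$ and therefore $\MM^{-1}(C)=\MM^{-1}(F)$. (b) By bipolarity, $\cl(\MM^{*}C^{*})=(\MM^{-1}C)^{*}$, so $\MM^{*}C^{*}$ is closed precisely when these two cones coincide.

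For the moreover claim, fix $v\in\bigl(\tcone(x,C)\cap\range\MM\bigr)\setminus\dir(x,C)$ and $a\in F^\perp$ with $\langle a,v\rangle<0$ (such $a$ exists because $\dir(x,C)=C+\linspan F\supseteq\linspan F$ forces $v\notin\linspan F$, so $-\Proj_{F^\perp}v$ works). Membership $\MM^{*}a\in\cl(\MM^{*}C^{*})$ is immediate from (a),(b): for every $u\in\MM^{-1}(C)=\MM^{-1}(F)$ one has $\MM u\in F$ and $a\in F^\perp$, hence $\langle\MM^{*}a,u\rangle=\langle a,\MM u\rangle=0$, so $\MM^{*}a\in(\MM^{-1}C)^{*}$. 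To see $\MM^{*}a\notin\MM^{*}C^{*}$, suppose for contradiction $\MM^{*}a=\MM^{*}c$ with $c\in C^{*}$, and set $w:=a-c\in\ker\MM^{*}=(\range\MM)^\perp$. Fix $y\in\E$ with $\MM y=x$. Then $\langle a,x\rangle=0$ (since $x\in F$ and $a\in F^\perp$) and $\langle w,x\rangle=\langle\MM^{*}w,y\rangle=0$, so $\langle c,x\rangle=0$. Since $c\in C^{*}$ and $x\in\ri F$, a routine face argument (push any $f\in F$ slightly past $x$ along $x-f$ to remain in $F\subseteq C$) forces $c\in F^\triangle=C^{*}\cap F^\perp$. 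Consequently $v\in\tcone(x,C)=(F^\triangle)^{*}$ yields $\langle c,v\rangle\geq 0$, while $\langle c,v\rangle=\langle a,v\rangle-\langle w,v\rangle=\langle a,v\rangle<0$ (using $w\perp\range\MM\ni v$), a contradiction.

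For $(2)\Rightarrow(1)$, a standard quotient argument reduces closedness of $\MM^{*}C^{*}$ to closedness of $C^{*}+(\range\MM)^\perp$ in $\Y$. The Slater-type intersection formula $(F\cap\range\MM)^{*}=F^{*}+(\range\MM)^\perp$---valid (with no closure) because $x\in\ri F\cap\range\MM=\ri F\cap\ri(\range\MM)$ certifies the constraint qualification---combined with the niceness identity $F^{*}=C^{*}+F^\perp$ and observation (a) gives
\[
\cl\bigl(C^{*}+(\range\MM)^\perp\bigr)=(C\cap\range\MM)^{*}=C^{*}+F^\perp+(\range\MM)^\perp.
\]
Hence $C^{*}+(\range\MM)^\perp$ is closed if and only if $F^\perp\subseteq C^{*}+(\range\MM)^\perp$, i.e., $\MM^{*}F^\perp\subseteq\MM^{*}C^{*}$. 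Under implication (2) this inclusion is enforced exactly by the contrapositive of the moreover construction: if some $a\in F^\perp$ failed it, normalizing a sequence $c_n\in C^{*}$ with $\MM^{*}c_n\to\MM^{*}a$ would extract a nonzero $\hat q\in F^\triangle\cap(\range\MM)^\perp$, and a parallel ``reverse'' argument then produces a vector $v\in\bigl(\tcone(x,C)\cap\range\MM\bigr)\setminus\dir(x,C)$, contradicting (2). I expect the main obstacle to be precisely this step---extracting a witness $v$ from the limit direction $\hat q$ and verifying the identity $(F\cap\range\MM)^{*}=F^{*}+(\range\MM)^\perp$ in the absence of strict separation; both invoke the Slater condition and niceness in essential ways.
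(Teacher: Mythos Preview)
The paper does not actually prove this theorem: it is quoted from \cite{Pataki:07} and \cite{GPat:11} and used as a black box (see the sentence immediately preceding the statement). So there is no ``paper's proof'' to compare against; what follows is an assessment of your argument on its own merits.

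Your treatment of the ``moreover'' clause, and hence of $(1)\Rightarrow(2)$ by contraposition, is correct and efficient. The two preliminary observations are sound: from $x\in\ri(C\cap\range\MM)$ one indeed gets $C\cap\range\MM\subseteq F$ and therefore $\MM^{-1}(C)=\MM^{-1}(F)$; and the bipolar identity $\cl(\MM^{*}C^{*})=(\MM^{-1}C)^{*}$ is standard. The contradiction argument showing $\MM^{*}a\notin\MM^{*}C^{*}$ is clean: the key step $\langle c,x\rangle=0\Rightarrow c\in F^{\triangle}$ uses $x\in\ri F$ exactly as it should, and then $v\in(F^{\triangle})^{*}$ together with $w\perp\range\MM\ni v$ forces the sign contradiction. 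Your parenthetical remark that a suitable $a$ always exists (take $a=-\Proj_{F^{\perp}}v$) matches the content of the Remark following the theorem in the paper.

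The direction $(2)\Rightarrow(1)$, however, has a genuine gap --- precisely the one you flag. Up to the displayed identity
\[
\cl\bigl(C^{*}+(\range\MM)^{\perp}\bigr)=C^{*}+F^{\perp}+(\range\MM)^{\perp}
\]
everything is fine (the equality $F^{*}=C^{*}+F^{\perp}$ for nice $C$ follows from $C\cap\linspan F=F$ and bipolarity). The reduction to the inclusion $F^{\perp}\subseteq C^{*}+(\range\MM)^{\perp}$ is also correct. But the final paragraph does not close the loop. You extract a nonzero recession direction $\hat q\in F^{\triangle}\cap(\range\MM)^{\perp}$ from a normalized sequence --- that part is valid --- yet the existence of such a $\hat q$ does \emph{not} by itself produce a witness $v\in\bigl(\tcone(x,C)\cap\range\MM\bigr)\setminus\dir(x,C)$. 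The ``parallel reverse argument'' is never specified, and it is not the contrapositive of the moreover construction: the moreover clause goes from $v$ to $a$, not from $a$ (or $\hat q$) to $v$. What is actually needed here is the equivalence, established in \cite[Theorem~1.1]{Pataki:07}, between closedness of $C^{*}+(\range\MM)^{\perp}$ and the equality $\tcone(x,C)\cap\range\MM=\dir(x,C)\cap\range\MM$; this uses the identity recorded in the paper's Remark, namely that on $\tcone(x,C)\cap\range\MM$ membership in $\dir(x,C)$ is the same as membership in $\linspan F$. Without invoking (or reproving) that equivalence, your $(2)\Rightarrow(1)$ remains incomplete.
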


\smallskip
\begin{rem}
	{\rm
		Following notation of Theorem~\ref{pataki-cl}, it is shown in \cite[Theorem 1.1, Corollary 3.1]{Pataki:07} that for any point $x\in\ri(C\cap \,\range
		\MM)$, we have equality
		\begin{equation*}
		(\tcone(x,C)\cap \range \MM) \setminus \dir(x,C) \, = \, (\tcone(x,C) \cap \range \MM)\setminus \linspan \face(x,C).
		\end{equation*}
		Hence for any point $x\in \ri(C\cap \,\range
		\MM)$ and any vector $v\in \left(\tcone(x,C) \cap \range \MM\right)\setminus \dir(x,C)$, there indeed exists some point $a$ satisfying \eqref{eq:av}. }
\end{rem}

\smallskip

The following sufficient condition for image closedness is now immediate. 
\smallskip
\begin{corollary}[Sufficient condition for image closedness]
	\label{cor:pataki-cl} \hfill \\
	Let $\MM:\E\rightarrow \Y$ be a linear transformation 
	between two Euclidean spaces $\E$ and $\Y$, and let $C\subseteq \Y$ be a
	nice, closed convex cone. If for some point $x\in\ri(C\cap \,\range
	\MM)$, the inclusion $\range(\MM) \subseteq \linspan \face(x,C)$ holds, then $\MM^*C^*$ is closed.
\end{corollary}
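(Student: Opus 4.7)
The plan is to reduce the claim to Theorem~\ref{pataki-cl}: it suffices to verify the implication~\eqref{eqn:unlin} for the distinguished point $x\in\ri(C\cap\range\MM)$ that the hypothesis furnishes. Set $F := \face(x,C)$. By the displayed description of the cone of feasible directions, we have $\dir(x,C) = C + \linspan F$, and since $0\in C$ this means $\linspan F \subseteq \dir(x,C)$.

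Now the verification is immediate. Pick any $v \in \tcone(x,C)\cap \range \MM$; by the standing hypothesis $\range\MM \subseteq \linspan F$, so $v \in \linspan F \subseteq \dir(x,C)$. Hence~\eqref{eqn:unlin} holds trivially. Since $C$ is nice and closed and $x \in \ri(C\cap\range\MM)$, the equivalence in Theorem~\ref{pataki-cl} then gives that $\MM^*C^*$ is closed.

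There is really no obstacle to overcome here: the content of the corollary is precisely that the hypothesis $\range\MM \subseteq \linspan\face(x,C)$ is already strong enough to bypass the subtle part of~\eqref{eqn:unlin}, namely the gap between $\tcone(x,C)=(F^{\triangle})^*$ and the typically smaller set $\dir(x,C)=C+\linspan F$. The only thing worth checking is that one is entitled to invoke Theorem~\ref{pataki-cl}, which requires niceness of $C$ and the existence of a point in $\ri(C\cap\range\MM)$ — both granted by the statement.
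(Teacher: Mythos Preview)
Your proof is correct and follows essentially the same route as the paper's: both set $F=\face(x,C)$, observe the chain $\range\MM\subseteq\linspan F\subseteq\dir(x,C)$, and then note that this immediately forces implication~\eqref{eqn:unlin}, so Theorem~\ref{pataki-cl} applies. The only difference is cosmetic---you spell out why $\linspan F\subseteq\dir(x,C)$ via $0\in C$, while the paper simply asserts the inclusion.
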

\begin{proof}
	Define $F:=\face(x,C)$ and note $\range(\MM) \subseteq \linspan F \subseteq \dir(x,C)$.
	We deduce
	\[
	\begin{array}{rcl}
	\tcone (x,C) \cap \range(\MM) 
	&\subseteq &
	\tcone (x,C) \cap \dir(x,C)
	
	%&\subseteq &
	%\tcone (x,C) \cap \dir(x,C)
	%\\
	=  \dir(x,C).
	\end{array}
	\]
	The result now follows from Theorem~\ref{pataki-cl}, since implication \eqref{eqn:unlin} holds.
\end{proof}

\smallskip

{\em Alternate proof of Theorem~\ref{thm:maincompl_fixed}.}
First, whenever $L=\emptyset$ one can easily verify the equation $\PP(\Sn_+)=\R^{E}$.
Hence the theorem holds trivially in this case.
Without loss of generality, we now permute the vertices $V$ so that we have $L=\{1,\ldots,r\}$ for some integer $r\geq 1$.
We will proceed by
applying Theorem~\ref{pataki-cl} with $\MM:=\PP^*$ and 
$C:=(\Sn_+)^*=\Sn_+$.
To this end, observe the equality 
\[
\Sn_+\cap \range \PP^*=\left\{\begin{bmatrix} A & 0 \\ 0 & 0 \end{bmatrix}: 
   A\in \Sr_+ \textrm{ and } A_{ij}=0 
       \textrm{ when } ij\notin E \right\}.
\]
Thus we obtain the inclusion
$$
X := \begin{bmatrix} I_r & 0 \\ 0 & 0 \end{bmatrix} \in
\ri (\Sn_+\cap \range \PP^*). 
$$
Observe 
$$\face(X,\Snp)= \left\{ \begin{bmatrix} A & 0 \\ 0 &
0 \end{bmatrix} : A\in \mathcal{S}^{r}_+ \right\}.$$
From~\cite[Lemma~3]{GPat:11},
we have the description
$$\tcone(X, \Sn_+) =
%&=&  
% \left(\face(X,\Snp)^\triangle\right)^*
%\\&=& 
\left\{ \begin{bmatrix} A & B \\ B^T &
C \end{bmatrix} : C \in \mathcal{S}^{n-r}_+ \right\},$$
while on the other hand 
$$\dir(X, \Sn_+) =
%&=&  
%\Snp + \spanl \face(X,\Snp)
%\\&=& 
 \left\{ \begin{bmatrix} A & B \\ B^T &
C \end{bmatrix} : C \in \mathcal{S}^{n-r}_+ ~ \textrm{ and } ~ \range
B^T\subseteq \range C \right\}.$$
Thus if the intersection 
$E \cap \bigl( \{1, \dots, r \} \times 
            \{r+1, \dots, n \} \bigr)$ is empty, then for 
any matrix 
\[
\begin{bmatrix} A & B \\ B^T &
C \end{bmatrix}\in \tcone(X,\Sn_+) \cap \range \PP^*,
\] 
we have $B=0$, and consequently this matrix lies in $\dir(X,\Sn_+)$. 
Using Theorem~\ref{pataki-cl}, we deduce that the image $\PP(\Sn_+)$ is closed.
Conversely, for any edge $i^*j^*\in E \cap \bigl( \{1, \dots,
r \} \times \{r+1, \dots, n \} \bigr)$, we can define the matrix 
$$V:=e_{ i^*}e_{j^{*}}^T+e_{j^*}e_{i^*}^T\in 
\left\{\tcone(X,\Sn_+)\setminus \dir(X,\Sn_+)\right\}\cap 
\range \PP^*,$$
where $e_{i^*}$ and $e_{j^*}$ denote the $i^*$'th and $j^*$'th unit vectors in $\R^n$. 
Theorem~\ref{pataki-cl} immediately implies that the image $\PP(\Sn_+)$ is not closed.
Moreover, in this case, define $A\in \Sn$ to be any matrix satisfying $A_{i^*j^*}< 0$ and $A_{ij}=0$ whenever $ij\in \{1,\ldots,r\}\times\{1,\ldots,r\}$. Then $A$ lies in $(\face(X,\Sn_+))^\perp$ and the inequality, $\langle A,V\rangle=2A_{i^*j^*} <0$, holds. Again appealing to Theorem~\ref{pataki-cl}, we deduce $\PP(A)\in (\cl \PP(\Sn_+))\setminus \PP(\Sn_+)$, as we had to show. Replacing $V$ by $-V$ shows that the same conclusion holds in the case $A_{i^*j^*}> 0$. This completes the proof.
$\square$

\smallskip

{\em Alternate proof of Theorem~\ref{thm:close_EDM_fixed}.}
As in the original proof, we can assume that $G$ is connected.
The proof proceeds by applying Corollary~\ref{cor:pataki-cl}.
To this end, in the notation of that corollary, we set $C:=\Snp$ and
$\MM=\mathcal{L}=\frac{1}{2}\KK^*\circ  \PP^*$. Clearly then we 
have the equality $\MM^*C^*=\PP(\mathcal{E}^n)$.
Define now the partial matrix $x \in\R^{E}$ with $x_{ij}=1$ for all $ij\in E$, 
and set $X :=\mathcal{L}(x)$.
We now claim that the inclusion
\begin{equation}\label{eq:Xrelint}
X \in \ri \left(\Snp \cap \range \mathcal{L} \right) \qquad \textrm{ holds}.
\end{equation}
To see this, observe that $X$ lies in the intersection 
$\Snp \cap \range \mathcal{L}$, since $X$ is a positively weighed Laplacian.
Now let $Y \in \Snp \cap \range \mathcal{L}$ be arbitrary, then 
$Y  = \mathcal{L}(y)$ for some partial matrix $y \in\R^E.$ Consider the matrices
$$
X \pm \epsilon(X-Y) \, = \, \mathcal{L}(x \pm \epsilon (x-y)).
$$
If $\epsilon >0$ is small, then $x \pm \epsilon (x-y)$ has all positive components, and so
$X \pm \epsilon(X-Y)$ is a positively weighed Laplacian, hence positive semidefinite.
This proves \eqref{eq:Xrelint}.
Now define $F=\face(X,\Snp)$. We claim that equation $\linspan F = \mathcal{S}_c$ holds. To see this, recall that the nullspace of $X$ is one-dimensional, being generated by $e$. Consequently $F$ has dimension $\frac{n(n-1)}{2}$. On the other hand $F$ is clearly contained in $\mathcal{S}_c$, a linear subspace of dimension $\frac{n(n-1)}{2}$. We deduce $\linspan F = \mathcal{S}_c$, as claimed.
The closure now follows from Corollary~\ref{cor:pataki-cl}.
$\square$

\section{Boundaries of projected sets \& facial reduction}
\label{sect:bndry}
To motivate the discussion, consider the general conic system
\begin{equation}\label{eqn:feas_reg}
F:=\{X\in C: \mathcal{M}(X)=b\},
\end{equation}
where $C$ is a proper (closed, with nonempty interior)
convex cone in an Euclidean space $\mathbb{E}$ and
$\mathcal{M}\colon\mathbb{E}\to\R^m$ is a linear mapping.
Classically we say that the Slater condition holds for this problem whenever there exists $X$ in the interior of 
$C$ 
%$X \in \ri(C)$ 
satisfying the system $\mathcal{M}(X)=b$. 
In this section, we first relate properties of the image set $\MM(C)$ to
the {\em facial reduction algorithm} of Borwein-Wolkowicz
\cite{BW80,BW81}, and to more recent variants \cite{MR3063940,
MR3108446, perm}
and then specialize the discussion to the PSD and EDM completion problems we have been studying.
%To this end, consider once again the problem
%\begin{equation}\label{eqn:feas_reg2}
%F:=\{X\in C: \mathcal{M}(X)=b\},
%\end{equation}

When strict feasibility fails, the {\em facial reduction} strategy aims
to embed the feasibility system in a Euclidean space of smallest
possible dimension. The starting point is the following elementary
geometric observation: exactly one of the following alternatives holds
\cite{BW80,BW81,MR3108446, perm}.
\newcounter{qcounter2}
\begin{list}{\arabic{qcounter2}.~}{\usecounter{qcounter2} \leftmargin=2em}
	\item The conic system in \eqref{eqn:feas_reg} is strictly feasible.
	\item\label{item:cirt} %The auxiliary problem
	%$$0\neq \mathcal{A}^*(y)\succeq 0 \quad\textrm{ and }\quad \langle y,b\rangle =0,$$
	%is consistent.
	There is a nonzero matrix $Y \in C^*$ so that the orthogonal complement $Y^{\perp}$ contains the affine space $\{X: \MM(X)=b\}$. 
\end{list}
%The second alternative is simply a restatement of transversality failing, though of course it follows from more classical considerations. %
The matrix $Y$ in the second alternative certifies that the entire
feasible region $F$ is contained in the slice $C \cap Y^{\perp}$.
Determining which alternative is valid is yet another system that needs to be solved, namely
find a vector $v$ satisfying the {\em auxiliary system}
$$0\neq \mathcal{M}^*v\in C^* \quad\textrm{ and }\quad \langle v,b\rangle =0,$$
and set $Y:=\mathcal{M}^*v$.
One can now form an ``equivalent'' feasible region to \eqref{eqn:feas_reg} by replacing $C$ with $C\cap Y^{\perp}$ and $\E$ with the linear span of $C\cap Y^{\perp}$.
%Hence one can reparametrize the feasible region using matrices in $\mathcal{S}^r$, %, define the affine space $\widehat{\mathcal{V}}:=\{X\in\mathcal{S}^r: UXU^T\in \mathcal{V}\}$.
%Then clearly 
%$$\textrm{the feasible region }\mathcal{S}^n_+\cap \mathcal{V} \textrm{ is contained in } U  \Big(\mathcal{S}^r_+\cap\widehat{\mathcal{V}}\Big) U^T.$$
One then repeats the procedure on this smaller problem, forming the
alternative, and so on and so forth until strict feasibility holds. The
number of steps for the procedure to terminate depends on the choices of
the exposing vectors $Y$. The minimal number of steps needed is the {\em
singularity degree} --- an intriguing measure of complexity
\cite{S98lmireport}. In general, the singularity degree is no greater
than $n-1$, and there are instances of semidefinite programming
 that require exactly $n-1$ facial reduction iterations \cite[Section 2.6]{lev_book}.

\smallskip

%(Classically, what we call Slater condition is 
%called ``generalized Slater condition''. We use our terminology for simplicity.) 
%***1*** 
%\begin{noteH}
%	move this back to original Slater statement?????
%\end{noteH}
%By \cite[Theorem~6.6]{con:70}, this amounts to requiring $b$ to lie in the relative 
%interior of $\mathcal{M}(C).$ 
%Since $\mathcal{M}$ is surjective, and hence an open mapping, 
%this amounts to requiring $b$ to lie
%in the interior of the image $\mathcal{M}(C).$ 
%Thus, recognizing that $b$ lies on the boundary of $\mathcal{M}(C)$ 
%certifies that the Slater condition has failed. %In this case, as is well known, off-the-shelf interior point methods struggle to make progress. 
%On the other hand, much more is true, as the following theorem shows: if a vector $v$ exposes $\face(b,\mathcal{M}(C))$, then  $\mathcal{M}^*v$ exposes
% $\face(F,C)$. 
The following theorem provides an interesting perspective on facial reduction in terms of the image set $\MM(C)$. In essence, the minimal face of $\MM(C)$ containing $b$ immediately yields the minimal face of $C$ containing the feasible region $F$, that is in principle no auxiliary sequence of problems for determining $\face(F,C)$ is needed. The difficulty is that geometry of $\MM(C)$ is in general complex and so a simple description of $\face(b,\MM(C))$ is unavailable. The auxiliary problem in the facial reduction iteration instead tries to represent 
$\face(b,\MM(C))$ using some dual vector $v$ exposing a face of
$\MM(C)$ containing $b$. The singularity degree is then exactly one if,
and only if, the minimal face $\face(b,\MM(C))$ is exposed. To the best of our knowledge, this relationship to exposed faces has either been overlooked in the literature or forgotten. In particular, an immediate consequence is that whenever the image cone $\MM(C)$ is facially exposed, the feasibility problem \eqref{eqn:feas_reg} has singularity degree at most one for any right-hand-side vector $b$, for which the feasible region is nonempty.
\smallskip
\begin{thm}[Facial reduction and exposed faces]
\label{thm:face_red}
Consider a linear operator $\mathcal{M}\colon\E\to\Y$, between two Euclidean spaces $\E$ and $\Y$, and let $C\subset \E$ be a proper convex cone. 
Consider a nonempty feasible set 
\begin{equation}\label{eq:feas_sis}
F:=\{X\in C: \MM(X)=b\}
\end{equation}
for some point $b\in \Y$. Then a vector $v$ exposes a proper face of
$\MM(C)$ containing $b$ if, and only if, $v$ satisfies the auxiliary system
\begin{equation}\label{eq:aux}
0\neq\mathcal{M}^*v\in C^* \quad\textrm{ and }\quad \langle v,b\rangle =0.
\end{equation}
 For notational convenience, define $N:=\face(b, \MM(C))$. Then the following are true.
\begin{enumerate}	
\item\label{claim:1} We always have $C\cap\MM^{-1}N=\face(F, C)$.
%\item\label{claim:2_in} For any vector $v\in N^{\triangle}$, the vector $\MM^*v$ exposes a face of $C$ containing $F$.
\item\label{claim:3_in} For any vector $v\in\R^m$ the following equivalence holds:
$$v \textrm{ exposes } N \quad\Longleftrightarrow\quad \mathcal{M}^*v \textrm{ exposes } \face(F, C).$$
\end{enumerate}
Consequently whenever the Slater condition fails, the singularity degree
of the system \eqref{eq:feas_sis} is equal to one if, and only if, the minimal face $\face(b,\MM(C))$ is exposed.
\end{thm}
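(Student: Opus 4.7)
The plan is to prove the four assertions in order, since each feeds into the next. The equivalence between ``$v$ exposes a proper face of $\MM(C)$ containing $b$'' and system~\eqref{eq:aux} is a short definition-unpacking. Since $C$ has nonempty interior, $\linspan\MM(C)=\range\MM$ and hence $\MM(C)^\perp=\ker\MM^*$. The three conditions ``$v\in\MM(C)^*$'', ``$v\notin\MM(C)^\perp$'' (so that the exposed face $\MM(C)\cap v^\perp$ is proper), and ``$\langle v,b\rangle=0$'' therefore translate respectively to $\MM^*v\in C^*$, $\MM^*v\neq 0$, and $\langle v,b\rangle=0$.

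For claim~\eqref{claim:1}, I would first check that $C\cap\MM^{-1}N$ is a face of $C$ containing $F$: the face property transfers from $N\unlhd\MM(C)$, and $b\in N$ yields $F\subseteq C\cap\MM^{-1}N$. This immediately gives $\face(F,C)\subseteq C\cap\MM^{-1}N$. The reverse inclusion is the main obstacle and relies crucially on $b\in\ri N$. Given $X\in C\cap\MM^{-1}N$, I would push $b$ away from $\MM(X)$ inside $N$: there exists $\epsilon>0$ with $b+\epsilon(b-\MM(X))\in N\subseteq\MM(C)$. Lifting this vector to a preimage $Z\in C$ and rearranging produces the convex combination
\[
X_1 \;:=\; \tfrac{1}{1+\epsilon}\,Z + \tfrac{\epsilon}{1+\epsilon}\,X \;\in\; C,\qquad \MM(X_1)=b,
\]
so that $X_1\in F\subseteq\face(F,C)$. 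Since $X_1$ lies in the relative interior of the segment $[Z,X]\subseteq C$, the defining face property of $\face(F,C)$ forces $X\in\face(F,C)$.

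For claim~\eqref{claim:3_in}, the central identity is
\[
C\cap(\MM^*v)^\perp \;=\; C\cap\MM^{-1}\bigl(v^\perp\cap\MM(C)\bigr),
\]
which follows from $\langle\MM^*v,X\rangle=\langle v,\MM(X)\rangle$ together with $\MM(X)\in\MM(C)$ for $X\in C$. If $v$ exposes $N$, the right-hand side becomes $C\cap\MM^{-1}N=\face(F,C)$ by claim~\eqref{claim:1}, so $\MM^*v$ exposes $\face(F,C)$. Conversely, if $\MM^*v$ exposes $\face(F,C)$, set $N':=v^\perp\cap\MM(C)$. Applying $\MM^*v$ to any element of $F$ gives $\langle v,b\rangle=0$, so $b\in N'$ and hence $N\subseteq N'$ by minimality of $N$. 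The displayed identity combined with claim~\eqref{claim:1} gives $C\cap\MM^{-1}N=C\cap\MM^{-1}N'$, and a short preimage argument (every $y\in N'$ equals $\MM(X)$ for some $X\in C$, which then lies in $C\cap\MM^{-1}N'=C\cap\MM^{-1}N$, whence $y\in N$) forces $N'\subseteq N$. Thus $v$ exposes $N$.

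For the final consequence, assume Slater fails. One facial reduction step picks some $v$ satisfying \eqref{eq:aux} and replaces $C$ by $C\cap(\MM^*v)^\perp$; writing $N':=v^\perp\cap\MM(C)$, the reduced cone equals $C\cap\MM^{-1}N'$ by the central identity. Strict feasibility of the reduced problem amounts to $\face(F,C)=C\cap\MM^{-1}N'$, which by claim~\eqref{claim:1} rewrites as $C\cap\MM^{-1}N=C\cap\MM^{-1}N'$, and by the preimage argument above forces $N'=N$, i.e.\ $v$ exposes $N$. Thus the singularity degree equals one precisely when some $v$ exposing $N$ exists, i.e.\ precisely when the minimal face $\face(b,\MM(C))$ is exposed.
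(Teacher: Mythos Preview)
Your proof is correct and shares the paper's overall architecture: show $C\cap\MM^{-1}N$ is a face of $C$, exploit $b\in\ri N$ to pin down that face as $\face(F,C)$, and transfer exposing vectors through $\MM^*$. The execution differs in two places worth noting. For claim~\ref{claim:1}, the paper obtains $F\cap\ri(C\cap\MM^{-1}N)\neq\emptyset$ by invoking Rockafellar's relation $\ri N=\MM(\ri(C\cap\MM^{-1}N))$ together with a cited minimal-face characterization; your segment-pushing argument (push $b$ past $\MM(X)$ inside $N$, lift to $Z\in C$, apply the face property to $[Z,X]$) is a self-contained realization of the same idea that avoids external references. For the forward direction of claim~\ref{claim:3_in}, the paper essentially reruns the relative-interior argument with $C\cap(\MM^*v)^{\perp}$ in place of $C\cap\MM^{-1}N$, whereas you isolate the identity $C\cap(\MM^*v)^{\perp}=C\cap\MM^{-1}\bigl(v^{\perp}\cap\MM(C)\bigr)$ and reduce immediately to claim~\ref{claim:1}; this is tighter. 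One small point you leave implicit in the reverse direction: to invoke ``minimality of $N$'' for $N\subseteq N'$ you need $N'$ to be a face of $\MM(C)$, which follows since $\MM^*v\in C^*$ forces $v\in\MM(C)^*$. Your treatment of the singularity-degree consequence is more explicit than the paper's, which essentially leaves it as a remark.
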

\begin{proof}
	First suppose that $v$ exposes a proper face of $\MM(C)$ containing $b$. Clearly we have $\langle v,b\rangle=0$. Observe moreover  
	$$\langle \MM^* v, X\rangle =\langle v, \MM(X)\rangle\geq 0, \qquad \textrm{ for any }X\in C,$$
and hence the inclusion $\MM^* v\in C^*$ holds. Finally, 
since $v$ exposes a proper face of $\MM(C)$, we deduce $v\notin (\range \MM)^{\perp}=\kernel \MM^*$. We conclude that $v$ satisfies the auxiliary system \eqref{eq:aux}. The converse follows along the same lines.	

We first prove claim \ref{claim:1}. To this end, we first verify that $C\cap\MM^{-1}N$ is a face of $C$. Observe for any $x,y\in C$ satisfying $\frac{1}{2}X+\frac{1}{2}Y\in C\cap\MM^{-1}N$, we have $\frac{1}{2}\MM(X)+\frac{1}{2}\MM(Y)\in N$. Since $N$ is a face of $\MM(C)$, we deduce $X,Y\in C\cap\MM^{-1}N$ as claimed. Now clearly $C\cap\MM^{-1}N$ contains $F$. It is easy to verify now the equality
$$N=\MM(C\cap \MM^{-1}N).$$
Appealing to \cite[Theorem~6.6]{con:70}, we deduce
$$\ri N= \MM(\ri (C\cap \MM^{-1}N))$$
Thus $b$ can be written as $\MM(X)$ for some $X\in \ri (C\cap \MM^{-1}N)$.
We deduce that the intersection $F\cap \ri (C\cap \MM^{-1}N)$ is nonempty. Appealing to~\cite[Proposition~2.2(ii)]{PatakiSVW:99}, we obtain the claimed equality $C\cap \MM^{-1}N=\face(F, C)$.

%Next we prove claim \ref{claim:2_in}. To this end, observe that for any vector 
%$v\in N^{\triangle}$, we have  
%$$\langle \MM^* v, X\rangle =\langle v, \MM(X)\rangle\geq 0, \qquad %\textrm{ for any }X\in C,$$
%and hence the inclusion $\MM^* v\in C^*$ holds. On the other hand, for any $X\in F$, we clearly have $\langle \MM^*v, X\rangle=\langle b,v\rangle=0.$ 
%Thus the face of $C$ exposed by $\MM^*v$ contains $F$, as claimed.

Finally we prove \ref{claim:3_in}.	To this end, suppose first that a vector $v$ exposes $N$. Then by what has already been proven $v$ satisfies the auxiliary system and therefore
%Then we have $$N=v^{\perp}\cap \MM(C), \qquad\qquad b \in \ri N,
%\qquad\qquad v\in N^\triangle=b^\perp \cap (\MM(C))^*.$$
%Observe now
%$$\langle \MM^* v, X\rangle =\langle v, \MM(X)\rangle\geq 0, \qquad \textrm{ for any }X\in C,$$
%and hence the inclusion $\MM^* v\in C^*$ holds. 
$C\cap (\mathcal{M}^*v)^{\perp}$ is an exposed face of $C$ containing $F$. 
%Moreover for any $X\in F$, we have
%$\langle \MM^* v, X\rangle=\langle v,b\rangle=0$, and therefore $F$ is contained in $C\cap (\mathcal{M}^*v)^{\perp}$. 
%Suppose now that the stronger inclusion $v\in \ri N^{\triangle}$ holds and that $N$ is an exposed face of $\MM(C)$.
It is standard now to verify equality
\begin{equation}\label{eqn:equality_rep}
\MM(C\cap (\mathcal{\MM}^*v)^{\perp})=\MM(C)\cap v^{\perp}=N.
\end{equation}
Indeed, for any $a\in \MM(C)\cap v^{\perp}$, we may write $a=\MM(X)$ for some $X\in C$ and consequently deduce $\langle X,\mathcal{\MM}^*v\rangle =\langle a,v\rangle=0$. Conversely, for any $X\in C\cap (\mathcal{\MM}^*v)^{\perp}$, we have
$\langle\mathcal{\MM}(X),v\rangle=\langle X, \mathcal{\MM}^*v\rangle =0$, as claimed.

Combining equation \eqref{eqn:equality_rep} with ~\cite[Theorem~6.6]{con:70}, we deduce
$$\ri(N)=\MM(\ri (C\cap (\mathcal{M}^*v)^{\perp})).$$
Thus $b$ can be written as $\MM(X)$ for some $X\in \ri (C\cap (\mathcal{\MM}^*v)^{\perp})$.
We deduce that the intersection $F\cap \ri (C\cap (\mathcal{M}^*v)^{\perp})$ is nonempty. Appealing to~\cite[Proposition~2.2(ii)]{PatakiSVW:99}, we conclude that $C\cap (\mathcal{M}^*v)^{\perp}$ is the minimal face of $C$ containing $F$.

Now conversely suppose that $\MM^*v$ exposes  $\face(F, C)$. Then clearly $v$ exposes a face of $\MM(C)$ containing $b$. On the other hand, by claim \ref{claim:1}, we have 
$$C \cap \MM^{-1}N=\face(F, C)=C\cap (\MM^*v)^{\perp}.$$
Hence a point $\MM(X)$ with $X\in C$ lies in $\MM(C)\cap v^{\perp}$
if, and only if, it satisfies
$0=\langle v,\MM(X)\rangle=\langle \MM^*v,X\rangle$, which by the above equation is equivalent to the inclusion $\MM(X)\in N$. This completes the proof.
\end{proof}

\smallskip

The following example illustrates Theorem \ref{thm:face_red}.
\begin{example}[Singularity degree and facially exposed faces]
\label{ex:facialexp} {\rm Consider the cone $C := \psd{3}$. 
Define now the mapping $\MM: \sym{3} \rightarrow \rad{2}$ and the vector $b\in\R^2$ to be 
$$
\MM(X) = \begin{pmatrix} X_{11}\\ X_{33}\end{pmatrix} \quad \textrm{ and } \quad b = \begin{pmatrix}1 \\ 0\end{pmatrix}.
$$
Then the singularity degree of the system (\ref{eqn:feas_reg}), namely
$$\{X\in \psd{3}:X_{11}=1, X_{33}=0\},$$
 is one. Indeed $v = (0,1)^T$ is a solution to the auxiliary system, and hence Slater condition fails. On the other hand, the feasible matrix $X=I -e_3{e_3}^T$ shows that the maximal solution rank of the system is two.
% $\begin{bmatrix}
% 1&0&0\\
% 0&1&0\\
% 0&0&0
% \end{bmatrix}$
This also follows immediately from Theorem~\ref{thm:face_red} since the image set
$\MM(\psd{3}) = \rad{2}_+$ is facially exposed. %Moreover, the theorem implies that the singularity degree of the system is always at most one (when feasible) regardless of the vector $b$. 

We next slightly change this example by adding a coordinate. Namely, 
define $\MM: \sym{3} \rightarrow \rad{3}$ and $b$ to be 
$$
\MM(X) = \begin{pmatrix} X_{11}\\ X_{33} \\X_{22} + X_{13} \end{pmatrix} \quad \textrm{ and } \quad b = \begin{pmatrix}1 \\ 0 \\ 0\end{pmatrix}.
$$
Observe that the singularity degree of the system \eqref{eqn:feas_reg} is at most two, since it admits a rank one feasible solution $X=e_1e_1^T$. On the other hand, one can see directly from Theorem~\ref{thm:face_red} that the singularity degree is exactly two.
%Then the singularity degree of the corresponding system (\ref{eqn:feas_reg}) is two (we can take first $v = (0,0,1)^T$ then 
%$v = (0,1,0)^T$ as consecutive solutions to the auxiliary system). 
%Therefore the minimal face of $\MM(\psd{3})$ that contains $b$ must be 
%nonexposed. To see this directly, one easily checks 
Indeed, one easily checks
$$
\MM(\psd{3}) \, = \, \rad{3}_+ \cup \{ (x,y,z) \, | \, x \geq 0, \, y \geq 0, \, z \leq 0, \, xy \geq z^2 \, \},
$$
i.e., we obtain $\MM(\psd{3})$ by taking the union of $\rad{3}_+$ with 
a rotated copy of $\psd{2}$. The set $\MM(\psd{3})$ 
has a nonexposed face which contains $b$ in its relative interior -- this is easy to see 
by intersecting $\MM(\psd{3})$ with the hyperplane $x=1$ and graphing.
}
\end{example}

\smallskip

An interesting consequence of Theorem \ref{thm:face_red} above is that it is the lack of facial exposedness of the image set $\MM(C)$ that is responsible for a potentially large singularity degree and hence for serious numeric instability, i.e. weak H\"olderian error bounds \cite{S98lmireport}.

\smallskip

\begin{corollary}[H\"olderian error bounds and facial exposedness]\label{cor:Holder}
Consider a linear mapping $\mathcal{A}\colon\mathcal{S}^n\to\R^m$ having the property that $\mathcal{A}(\mathcal{S}^n_+)$ is facially exposed. 
For any vector $b\in\R^m$, define the affine space 
$$\mathcal{V}=\{X:\mathcal{A}(X)=b\}.$$
Then whenever the intersection $\mathcal{S}^n_+\cap \mathcal{V}$ is nonempty it admits a $\frac{1}{2}$-H\"{o}lder error bound: 
for any compact set $U$, there is a constant $c> 0$ so that  
$$\dist_{\mathcal{S}^n_+\cap \mathcal{V}}(X)\leq c\cdot\sqrt{\max\Big\{\dist_{\mathcal{S}^n_+}(X),\dist_{\mathcal{V}}(X)\Big\}}\qquad \textrm{ for all }x\in U.$$	
\end{corollary}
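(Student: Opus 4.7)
The plan is to derive this corollary as a direct two-step consequence of Theorem~\ref{thm:face_red} combined with Sturm's classical error bound estimate for semidefinite feasibility systems~\cite{S98lmireport}. The key observation is that facial exposedness of the image set $\mathcal{A}(\mathcal{S}^n_+)$ puts an a priori ceiling on the singularity degree of the feasibility problem, and the singularity degree is precisely the quantity controlling the Hölder exponent in Sturm's bound.

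First, I would fix $b\in\R^m$ with $\mathcal{S}^n_+\cap\mathcal{V}\neq\emptyset$ and apply Theorem~\ref{thm:face_red} with $\mathcal{M}:=\mathcal{A}$ and $C:=\mathcal{S}^n_+$. If the Slater condition holds for the system $\{X\in\mathcal{S}^n_+:\mathcal{A}(X)=b\}$, its singularity degree is zero. If instead Slater fails, then $N:=\face(b,\mathcal{A}(\mathcal{S}^n_+))$ is a proper face of $\mathcal{A}(\mathcal{S}^n_+)$; by hypothesis, $\mathcal{A}(\mathcal{S}^n_+)$ is facially exposed, so $N$ is exposed, and the final assertion of Theorem~\ref{thm:face_red} yields that the singularity degree equals exactly one. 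Either way, the singularity degree $d$ of the feasibility system satisfies $d\leq 1$.

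Next, I would invoke Sturm's error bound result, which asserts that for a consistent PSD feasibility system with singularity degree $d$ and any compact set $U\subseteq \mathcal{S}^n$, there exists $c>0$ with
$$\dist_{\mathcal{S}^n_+\cap\mathcal{V}}(X)\leq c\cdot \max\Big\{\dist_{\mathcal{S}^n_+}(X),\dist_{\mathcal{V}}(X)\Big\}^{2^{-d}}\qquad\text{for all }X\in U.$$
Substituting $d\leq 1$ immediately yields the $\tfrac{1}{2}$-Hölder estimate claimed in the corollary (noting that an exponent of $1\geq \tfrac{1}{2}$ in the Slater case gives an even stronger Lipschitz bound, which trivially implies the $\tfrac{1}{2}$-Hölder bound on any bounded set after adjusting the constant).

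The main obstacle, such as it is, lies not in any new argument but in correctly invoking Sturm's theorem: one must verify that the notion of singularity degree appearing in Theorem~\ref{thm:face_red} coincides with the one used by Sturm (namely, the minimal number of facial reduction steps needed before Slater holds), and that the compactness/constant conventions match. Once that matchup is verified, the corollary follows by concatenating the two results with no further computation.
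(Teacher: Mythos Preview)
Your proposal is correct and mirrors the paper's own proof exactly: the paper simply states that the result follows immediately from Theorem~\ref{thm:face_red} and Sturm's error bound~\cite{S98lmi}, and your write-up is precisely the unpacking of that one-line argument. Your caveat about matching the two notions of singularity degree is a fair point of care, but no additional idea is needed.
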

\begin{proof}
This follows immediately from Theorem~\ref{thm:face_red} and \cite{S98lmi}. 	
\end{proof}

\subsection{Facial reduction for completion problems}
For those problems with highly structured constraints, one can hope to solve the auxiliary problems directly.
For example, the following simple idea can be fruitful: fix a subset $I\subset \{1,\ldots,m\}$ and let $\mathcal{M}_I(X)$ and $b_I$, respectively, denote restrictions of $\mathcal{M}(X)$ and $b$ to coordinates indexed by $I$. Consider then the relaxation:
$$F_I:=\{X\in C: \mathcal{M}_I(X)=b_I\}.$$
If the index set $I$ is chosen so that the image $\mathcal{M}_I(C)$ is ``simple'', then we may find the minimal face $\face(F_I,C)$, as discussed above. Intersecting such faces for varying index sets $I$ may yield a drastic dimensional decrease. Moreover, observe that this preprocessing step is entirely parallelizable.

Interpreting this technique in the context of matrix completion problems, we recover the Krislock-Wolkowicz algorithm \cite{Nathan_Henry}.
Namely note that when $\mathcal{M}$ is simply the projection $\PP$ and we set $C=\Snp$ or $C=\EE^n$, we obtain the PSD and EDM completion problems,
\begin{equation*}
F:=\{X\in C: \PP(X)=a \}=\{X\in C: X_{ij}=a_{ij} \textrm{ for all }
ij\in E\},
\end{equation*}
where $a\in\R^E$ is a partial matrix.  %***7***
It is then natural to consider indices $I\subset E$ describing clique edges in the graph since then the images $\PP_I(C)$ are the smaller dimensional PSD and EDM cones, respectively ---  sets that are well understood. This algorithmic strategy becomes increasingly effective when the rank (for the PSD case) or the embedding dimension (for the EDM case) of the specified principal minors are all small. Moreover, we will show that under a chordality assumption, the minimal face of $C$ containing the feasible region is guaranteed to be discovered if all the maximal cliques were to be considered; see Theorems~\ref{thm:cliquesuffPSD} and \ref{thm:fin_term}. This, in part, explains why the EDM completion algorithm of \cite{Nathan_Henry} works so well. Understanding the geometry of $\PP_I(C)$ for a wider class of index sets $I$ would yield an even better preprocessing strategy.
We defer to \cite{Nathan_Henry} for extensive numerical results and implementation issues showing that the discussed algorithmic idea is extremely effective for EDM completions. 

In what follows,  by the term ``clique $\chi$ in $G$'' we will mean a collection of $k$ pairwise connected vertices of $G$. The symbol $|\chi|$ will indicate the cardinality of $\chi$ (i.e. the number of vertices) while $E(\chi)$ will denote the edge set in the subgraph induced by $G$ on $\chi$.
%we will abuse notation slightly and by the term ``$k$-clique $\chi$ in $G$'' we will mean either a collection of $k$ pairwise connected vertices of $G$ or the set of $k\choose 2$ edges pairwise joining these vertices, whichever will be notationally convenient. The meaning will be clear from context. 
For a partial matrix $a\in\R^E$, the symbol $a_{\chi}$ will mean the restriction of $a$ to $E(\chi)$, whereas $\PP_{\chi}$ will be the projection of $\Ss^n$ onto $E(\chi)$. The symbol $\mathcal{S}^{\chi}$ will indicate the set of $|\chi|\times|\chi|$ symmetric matrices whose rows and columns are indexed by $\chi$. Similar notation will be reserved for  $\mathcal{S}^{\chi}_+$.
If $\chi$ is contained in $L$, then we may equivalently think of $a_{\chi}$ as a vector lying in $\R^{E(\chi)}$ or as a matrix lying in $\mathcal{S}^{\chi}$. Thus the adjoint $\PP^*_{\chi}$ assigns to a partial matrix $a_{\chi}\in \mathcal{S}^{\chi}$  an $n\times n$ matrix whose principal submatrix indexed by $\chi$ coincides with $a_{\chi}$ and whose all other entries are zero. 

%\subsection{The PSD case for boundaries and clique facial reduction}
%In the following we let the \textdef{compact spectral %decomposition} denote the
%eigenvalue decomposition that uses only the nonzero eigenvalues.
%Applying Theorem~\ref{thm:face_red} to PSD completion constraints, we obtain the following result.
\smallskip
\begin{thm}[Clique facial reduction for PSD completions]
\label{thm:PSD_face} 
Let $\chi \subseteq L$ be any $k$-clique in the graph $G$.
%Permuting the vertices, if necessary, we may suppose 
%$\chi=\{i: 1\leq i\leq k\}$.  
Let $a\in\R^{E}$ be a partial PSD matrix and define %$A_{\chi}\in \mathcal{S}^k_+$ 
%to be the restriction of $a$ to $\chi$. 
%Then the minimal face of the cone $\Snp$ containing the region 
\begin{equation*}
\label{eq:minfacechi}
F_{\chi}:= 
\{X\in\Snp: X_{ij}=a_{ij} \textrm{ for all } ij\in E(\chi)\}.
\end{equation*}
Then for any matrix $v_{\chi}$ exposing $\face(a_{\chi},\Ss^\chi_+)$, the matrix
$$
%\begin{bmatrix}
%V & 0\\
%0 & 0
%\end{bmatrix}
\PP^*_{\chi}v_{\chi}
\quad \textrm{ exposes }\quad \face(F_{\chi},\Snp).
$$
%is given by the expression 
%\begin{equation}\label{eqn:needed}
%\begin{bmatrix}
%U & 0  \cr
%0 & I  \cr
%\end{bmatrix} 
%\mathcal{S}^{n-(k-r)}_+ 
%\begin{bmatrix}
%U & 0  \cr
%0 & I  \cr
%\end{bmatrix}^T,
%\end{equation}
%where $r$ is the rank of $A_{\chi}$ and $A_{\chi}=U\Diag\Big(\lambda_1(A_{\chi}), \ldots, \lambda_r(A_{\chi})\Big) U^T$ 
%is a compact spectral decomposition. 
\end{thm}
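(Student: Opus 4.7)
The plan is to deduce the result as a direct application of Theorem~\ref{thm:face_red} (claim~\ref{claim:3_in}) applied to the linear mapping $\PP_\chi \colon \Sn \to \mathcal{S}^\chi$ and the cone $C = \Snp$. The point is that the clique structure makes the image $\PP_\chi(\Snp)$ extremely simple: it is exactly $\mathcal{S}^\chi_+$, a facially exposed cone whose faces we fully understand.

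First I would verify the identity $\PP_\chi(\Snp) = \mathcal{S}^\chi_+$. The inclusion $\PP_\chi(\Snp) \subseteq \mathcal{S}^\chi_+$ is immediate, since principal submatrices of PSD matrices are PSD. Conversely, because $\chi \subseteq L$, the set $E(\chi)$ consists entirely of edges among vertices carrying self-loops; thus for any $A \in \mathcal{S}^\chi_+$, the $n \times n$ matrix obtained by placing $A$ in the $\chi\times\chi$ block and zeros elsewhere is PSD and projects to $A$ under $\PP_\chi$. This also shows that the feasibility system $F_\chi$ is nonempty: since $a$ is a partial PSD matrix, $a_\chi \in \mathcal{S}^\chi_+ = \PP_\chi(\Snp)$, so $F_\chi \neq \emptyset$.

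Next I would invoke Theorem~\ref{thm:face_red} with $\MM = \PP_\chi$, $C = \Snp$, $b = a_\chi$. Set $N := \face(a_\chi, \PP_\chi(\Snp)) = \face(a_\chi, \mathcal{S}^\chi_+)$, using the image identity above. By hypothesis, $v_\chi$ exposes this face $N$. Claim~\ref{claim:3_in} of Theorem~\ref{thm:face_red} then gives exactly the desired conclusion:
\[
\PP_\chi^* v_\chi \quad \text{exposes} \quad \face(F_\chi, \Snp).
\]
The degenerate case in which $a_\chi$ lies in the relative interior of $\mathcal{S}^\chi_+$ (so that $N = \mathcal{S}^\chi_+$ and $v_\chi = 0$) is consistent: then the clique carries no facial reduction information, $\PP_\chi^* v_\chi = 0$ exposes $\Snp$ itself, and the statement holds trivially.

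The main ``obstacle'' is really just a bookkeeping one: identifying the image cone $\PP_\chi(\Snp)$ and recognizing that facial exposedness of $\mathcal{S}^\chi_+$ is precisely what makes the biconditional in Theorem~\ref{thm:face_red}\eqref{claim:3_in} usable here. Once those two observations are in place, the theorem does all the real work. No chordality hypothesis on $G$ is needed at this stage; chordality will only enter later, when one combines several cliques to argue that the intersection of the faces $\face(F_\chi, \Snp)$ recovers the full minimal face $\face(F, \Snp)$.
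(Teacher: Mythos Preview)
Your proposal is correct and follows exactly the paper's own approach: the paper's proof is the single line ``Simply apply Theorem~\ref{thm:face_red} with $C=\Snp$, $\mathcal{M}=\PP_{\chi}$, and $b=a_{\chi}$.'' You have merely (and appropriately) spelled out the verifications that $\PP_\chi(\Snp)=\mathcal{S}^\chi_+$ and $F_\chi\neq\emptyset$ that the paper leaves implicit.
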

\begin{proof}
Simply apply Theorem~\ref{thm:face_red} with $C=\Snp$, $\mathcal{M}=\PP_{\chi}$, and $b=a_{\chi}$.
\end{proof}
\smallskip

Theorem~\ref{thm:PSD_face} is transparent and easy. Consequently it is natural to ask whether the minimal face of $\Snp$ containing the feasible region of a PSD completion problem can be found using solely faces arising from cliques, that is those faces described in Theorem~\ref{thm:PSD_face}. The answer is no in general: the following example exhibits a PSD completion problem that fails the Slater condition but for which all specified principal submatrices are definite, and hence all faces arising from Theorem~\ref{thm:PSD_face} are trivial.

\smallskip 
\begin{example}[Slater condition \& nonchordal graphs]
\label{ex:n4complbndr}{\hfill \\}
{\rm
Let $G=(V,E)$ be a cycle on four vertices with each vertex attached to a loop, that is $V=\{1,2,3,4\}$ and $E=\{12,23,34,14\}\cup\{11,22,33,44\}$.
Define the following PSD completion problems $C(\epsilon)$, parametrized by $\epsilon\geq 0$:
\begin{equation*}
%\label{eq:paramcompl}
C(\epsilon):\qquad\qquad
\begin{bmatrix}
1+\epsilon& 1 & ? & -1 \cr
1 & 1+\epsilon & 1 & ? \cr
? & 1 & 1 +\epsilon& 1 \cr
-1 & ? & 1 & 1 +\epsilon\cr
\end{bmatrix}.
\end{equation*}
Let $a(\epsilon)\in\R^E$ denote the corresponding partial matrices.
According to~\cite[Lemma 6]{GrJoSaWo:84} there is a unique positive semidefinite
matrix $A$ satisfying $A_{ij}=1, \forall |i-j|\leq 1$, namely the
matrix of all $1$'s. Hence the PSD completion problem $C(0)$ is infeasible, that is 
$a(0)$ lies outside of $\PP(\mathcal{S}^4_+)$. On the other hand, for all sufficiently large $\epsilon$, the partial matrices 
$a(\epsilon)$ do lie in $\PP(\mathcal{S}^4_+)$ due to the diagonal dominance. 
Taking into account that $\PP(\mathcal{S}^4_+)$ is closed (by Theorem~\ref{thm:maincompl_fixed}), we deduce that there exists
$\hat{\epsilon} >0$, so that $a(\hat{\epsilon})$ lies on the boundary of
$\PP(\mathcal{S}^4_+)$, that is the Slater condition \underline{fails} 
for the completion problem $C(\hat{\epsilon})$. On the other hand for all $\epsilon > 0$, the partial matrices 
$a(\epsilon)$ are clearly positive definite, and hence $a(\hat{\epsilon})$ is a partial PD matrix.  
%Observe that by Theorem~\ref{thm:maincompl}, the set $\PP_E(\mathcal{S}^n_+)$ is closed. Moreover for all sufficiently large $\epsilon$, the problems $C(\epsilon)$ are feasible due to the diagonal dominance.  
%Clearly for all $\epsilon > 0$, all the existing principal minors corresponding to
%cliques in the graph are strictly positive. We deduce that there exists
%$\hat{\epsilon} >0$, so that the corresponding partial matrix is%
%positive definite, but it lies on the boundary of the set
%$\PP(\mathcal{S}^4_+)$, that is the Slater condition \underline{fails} 
%for the completion problem $C(\hat{\epsilon})$. %in \eqref{eq:paramcompl}.
In light of this observation, consider solving the semi-definite program:
\begin{equation}
\label{eq:primaleps}
\begin{array}{cll}
\min & \epsilon \\
\text{s.t.}     
          & 
\begin{bmatrix}
1+\epsilon & 1 & \alpha & -1 \cr
1 & 1+\epsilon & 1 & \beta \cr
\alpha & 1 & 1+\epsilon & 1 \cr
-1 & \beta & 1 & 1+\epsilon \cr
\end{bmatrix}\succeq 0\\
\end{array}
\end{equation}
Doing so, we deduce that $\hat{\epsilon}=\sqrt 2-1, \hat{\alpha}=\hat{\beta}=0$ is optimal. Formally, we can
verify this by finding the dual of \eqref{eq:primaleps} and
checking feasibility and complementary slackness for the primal-dual
optimal pair
$\widehat{X}$ and $\widehat{Z}$
\[
  \widehat{X} 
=\begin{bmatrix}
\sqrt 2 & 1 & 0 & -1 \cr
1 & \sqrt 2 & 1 & 0 \cr
0 & 1 & \sqrt 2 & 1 \cr
-1 & 0 & 1 & \sqrt 2 \cr
\end{bmatrix}, \qquad
  \widehat{Z}
=\frac 14 \begin{bmatrix}
1 & -\frac 1{\sqrt 2} & 0 & \frac 1{\sqrt 2} \cr
-\frac 1{\sqrt 2} & 1 & -\frac 1{\sqrt 2} & 0 \cr
0 & -\frac 1{\sqrt 2} & 1 & -\frac 1{\sqrt 2} \cr
\frac 1{\sqrt 2} & 0 & -\frac 1{\sqrt 2} & 1 \cr
\end{bmatrix}.
\]
}
\end{example}

Despite this pathological example, we now show that at least for chordal graphs, the minimal face of the PSD completion problem can be found solely from faces corresponding to cliques in the graph. We begin with the following simple lemma.
\smallskip

\begin{lem}[Maximal rank completions]\label{lem:max_rank_comp} 
%***8***
Suppose without loss of generality $L=\{1,\ldots,r\}$ 
and let $G_L:=(L,E_L)$ be the graph induced on $L$ by $G. \,$  
Let $a\in\R^E$ be a partial matrix and $a_{E_L}$ the restriction of $a$ to $E_L.$ 
Suppose that $X_L \in \Srp$ is a maximum rank PSD completion of 
$a_{E_L}, $ and 
$$
X = \begin{bmatrix}
A & B\\
B^T & C
\end{bmatrix}
$$
is an arbitrary PSD completion of $a. \,$ 
Then 
$$
X_\mu := 
\begin{bmatrix}
X_L & B\\
B^T &  C+\mu I  
\end{bmatrix}
$$
is a maximal rank PSD completion of $a\in\R^E$ for all sufficiently large $\mu$.
\end{lem}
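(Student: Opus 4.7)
The plan is to split the claim into two parts: first, verify that $X_\mu$ is in fact a PSD completion of $a$ for all large $\mu$, and second, show that its rank equals the maximum rank attainable by any PSD completion of $a$. The guiding principle throughout is the standard convexity fact that a maximum rank element of a convex subset of $\Snp$ lies in the relative interior of that set, which for the set of PSD completions of $a_{E_L}$ translates to the range inclusion
\[
\range A' \subseteq \range X_L
\]
for every PSD completion $A'$ of $a_{E_L}$; see for example the discussion following \eqref{eq:isomorph}. In particular this applies to $A$, the $L\times L$ block of $X$.

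For the PSD part, I would use the generalized Schur complement characterization of positive semidefiniteness for block matrices. Because $X$ is PSD, the corresponding Schur complement condition forces $\range B \subseteq \range A$, which combined with the range inclusion above yields $\range B \subseteq \range X_L$. Therefore the generalized Schur complement
\[
(C+\mu I) - B^T X_L^{\dagger} B
\]
is well defined as the shift of a fixed symmetric matrix by $\mu I$, and hence is PSD once $\mu$ is large enough. By the PSD block criterion, $X_\mu \succeq 0$. That $X_\mu$ completes $a$ is immediate since $X_L$ completes $a_{E_L}$ and the entries of $B$, together with the entries of $C+\mu I$ that correspond to edges in $E$, are unchanged (recall $L^c$ contains no loops, so no diagonal entry of $C$ is prescribed).

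For rank maximality, I would invoke the rank formula for PSD block matrices: whenever $\begin{bmatrix} P & Q \\ Q^T & R \end{bmatrix}$ is PSD with $\range Q \subseteq \range P$, its rank equals $\rank P + \rank(R - Q^T P^{\dagger} Q)$. Applied to an arbitrary PSD completion $Y = \begin{bmatrix} A' & B' \\ (B')^T & C' \end{bmatrix}$ of $a$, the top-left block $A'$ is itself a PSD completion of $a_{E_L}$, so $\rank A' \leq \rank X_L$, and the Schur complement of $A'$ in $Y$ is an $(n-r)\times(n-r)$ PSD matrix of rank at most $n-r$. Hence
\[
\rank Y \leq \rank X_L + (n-r).
\]
Applying the same rank formula to $X_\mu$ (which is legal by the range inclusion established above), the Schur complement $(C+\mu I)-B^T X_L^{\dagger} B$ is positive definite for large $\mu$ and therefore has rank exactly $n-r$, so $\rank X_\mu = \rank X_L + (n-r)$, matching the upper bound.

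The only subtlety I anticipate is the clean use of the generalized Schur complement: both the PSD criterion and the rank formula require the range inclusion of the off-diagonal block into the range of the diagonal block, and I would need to verify this once and for all from the maximum-rank characterization of $X_L$. Once that is in place, both halves of the argument are short and mechanical.
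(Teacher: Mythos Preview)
Your proof is correct and follows essentially the same approach as the paper's: both establish the range inclusion $\range B\subseteq\range X_L$ (the paper via an explicit diagonalization of $X_L$ and the relative-interior argument for $\bar X$ in $\widehat F$, you via the cleaner pseudoinverse Schur-complement criterion combined with $\range A\subseteq\range X_L$), and then both read off positive semidefiniteness and the rank equality $\rank X_\mu=\rank X_L+(n-r)$ from the Schur complement. The only difference is cosmetic---you package the same facts through the generalized Schur complement rather than by diagonalizing---so nothing further is needed.
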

\begin{proof}
We \emph{construct} the maximal rank PSD completion from the arbitrary
PSD completion $X$ by moving from $A$ to $X_L$ and from $C$ to $C+\mu I$
while staying in the same minimal face for the completions. 
To this end, define the sets 
\[
\begin{array}{rcl}
F         & = & \left\{X\in \Snp : 
         X_{ij} = a_{ij}, \textrm{ for all } ij\in E\right\}, \\
F_L         & = & \left\{X\in \Srp : X_{ij} = a_{ij}, 
              \textrm{ for all } ij\in E_L\right\}, \\
\widehat{F} & = & \{X\in \Snp : X_{ij} = a_{ij}, \textrm{ for all } ij\in E_L\}.
\end{array}
\]
Then $X_L$ is a maximum rank PSD matrix in $F_L.$ 
Observe that the rank of any PSD matrix 
$
\begin{bmatrix}
P & Q\\
Q^T & R
\end{bmatrix}
$
is bounded by $\rank P+\rank R$. Consequently 
the rank of any PSD matrix in  $F$ and also in $\widehat{F}$ is bounded by 
$\rank X_L + (n-r), \,$ and the matrix 
$$
\bar{X}=\begin{bmatrix}
X_L & 0\\
0 & I
\end{bmatrix}
$$
has maximal rank in $\widehat{F}, \,$ 
i.e., 
\begin{equation}
\label{eq:rihatF}
\bar{X} \in \ri(\widehat{F}).
\end{equation}
Let $U$ be a matrix of eigenvectors of $X_L, \,$ with eigenvectors corresponding to 
$0$ eigenvalues coming first. Then 
$$
U^T X_L U = \begin{bmatrix} 0 & 0 \\ 0 & \Lambda \end{bmatrix},
$$
where $0\prec \Lambda  \in \Ss^k_+$ is a diagonal matrix with all positive diagonal elements.

Define 
$$
Q = \begin{bmatrix} U & 0 \\ 0 & I \end{bmatrix}.
$$
Let $X$ be as in the statement of the lemma; then clearly 
$X \in \widehat{F}$ and we deduce using \eqref{eq:rihatF} that
\begin{eqnarray} \label{ttxt}
\bar{X} \pm \epsilon (\bar{X} - X)  \in  \Snp \, & \Leftrightarrow & Q^T
\bar{X}Q  \pm \epsilon Q^T(\bar{X} - X)Q \in  \Snp,
\end{eqnarray}
for some small $\epsilon > 0.$ 
We now have 
\[
\begin{array}{rclcl} 
Q^T \bar{X}Q & = & \begin{bmatrix} U^T X_L U & 0 \\ 0 & I \end{bmatrix} & = & \begin{bmatrix} 0 & 0 & 0 \\ 0 & \Lambda & 0 \\ 0 & 0 & I \end{bmatrix}, \\
Q^T X Q & = & \begin{bmatrix} U^T A U & U^T B \\ B^T U & C \end{bmatrix} & = & \begin{bmatrix} V_{11} & V_{12} & V_{13} \\ V_{12}^T & V_{22} & V_{23} \\ V_{13}^T & V_{23}^T & V_{33} \end{bmatrix},
\end{array}
\]
where $V_{11} \in \mathcal{S}^{r-k}, \, V_{22} \in \mathcal{S}^k, \, V_{33} \in \mathcal{S}^{n-r}$. From 
(\ref{ttxt}) we deduce $V_{11} = 0, \, V_{12} = 0, \, V_{13} = 0.$ 
Therefore 
\[
\begin{array}{rcl} 
Q^T X_\mu Q & = & \begin{bmatrix} U^T X_L U & U^T B \\ B^T U & \mu I + C \end{bmatrix} = \begin{bmatrix} 0 & 0 & 0 \\ 0 & \Lambda & V_{23} \\ 0 & V_{23}^T & \mu I + C \end{bmatrix}.
\end{array}
\]
By the Schur complement condition  for positive semidefiniteness we have that for sufficiently large $\mu$ 
the matrix $X_\mu$ is PSD, and
$\rank X_\mu = \rank X_L + (n-r)$; hence it is a maximal rank PSD matrix in $F.$
\end{proof}

\smallskip
\begin{thm}[Finding the minimal face on chordal graphs]
\label{thm:cliquesuffPSD}
Suppose that the graph induced by $G$ on $L$ is chordal. Consider a partial PSD matrix $a\in \R^{E}$ and the region
\[
F=\{X\in \Snp : X_{ij} = a_{ij} \textrm{ for all } ij\in E\}.
\] 
Then the equality $$\face(F, \Snp)=\bigcap_{\chi\in \Theta} \face(F_{\chi},\Snp)\qquad \textrm{ holds},$$
where $\Theta$ denotes the set of all maximal cliques in the restriction of $G$ to $L$, and for each $\chi \in \Theta$ we define the relaxation
$$
F_{\chi}:=\{X\in \Snp : X_{ij} = a_{ij} \textrm{ for all } ij\in E(\chi) \}.
$$
\end{thm}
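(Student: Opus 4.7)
The inclusion $\face(F,\Snp) \subseteq \bigcap_{\chi\in\Theta} \face(F_\chi,\Snp)$ is immediate, since $F\subseteq F_\chi$ for each $\chi$ and the minimal-face map is monotone. The content of the theorem is the reverse inclusion. Fix $Y\in \bigcap_\chi \face(F_\chi,\Snp)$. Picking any exposing vector $v_\chi\in \mathcal{S}^\chi_+$ of $\face(a_\chi,\mathcal{S}^\chi_+)$ with $\ker v_\chi = \range a_\chi$, Theorem~\ref{thm:PSD_face} translates the condition $Y\in \face(F_\chi,\Snp)$ into $\langle Y[\chi], v_\chi\rangle = 0$, which for PSD matrices is equivalent to $\range Y[\chi] \subseteq \range a_\chi$. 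In particular $a_\chi - \epsilon Y[\chi] \succeq 0$ for every $\chi\in\Theta$ and every sufficiently small $\epsilon > 0$.

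Next I reduce to the $L$-block. Assuming $F$ is nonempty, Lemma~\ref{lem:max_rank_comp} (after permuting $L$ to the front) furnishes a maximal-rank completion $X^* = \begin{bmatrix} X_L^* & B \\ B^T & C+\mu I\end{bmatrix}\in F$, where $X_L^*$ is a maximal-rank PSD completion of $a_{E_L}$ and $\mu$ is large enough that $C+\mu I\succ 0$. A Schur-complement calculation, combined with $X^*\succeq 0$ forcing $\range B \subseteq \range X_L^*$ and the rank identity $\rank X^* = \rank X_L^* + (n-r)$, shows $\range X^* = \range X_L^* \oplus \R^{L^c}$; hence $\face(F,\Snp) = \{Y\in\Snp : \range Y[L] \subseteq \range X_L^*\}$, where the reduction from $\range Y\subseteq \range X^*$ to the single block condition uses the PSD range-containment fact applied to $Y$ itself. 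Writing $Y_L := Y[L]$, the theorem reduces to the following claim: under the hypothesis $\range Y_L[\chi] \subseteq \range a_\chi$ for every $\chi\in\Theta$, one has $\range Y_L \subseteq \range X_L^*$.

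This reduced claim is established by a single construction, in which chordality enters exactly once. For small $\epsilon > 0$ the partial matrix $a_{E_L} - \epsilon Y_L|_{E_L}$ is partial PSD on $G_L$: its restriction to each maximal clique $\chi\in\Theta$ is $a_\chi - \epsilon Y_L[\chi] \succeq 0$ by the first paragraph, and any smaller clique submatrix is itself a principal submatrix of one of these. Since $G_L$ is chordal and every vertex of $G_L$ lies in its own loop set, Theorem~\ref{thm:psd_comp}(2) applied to $G_L$ supplies a PSD completion $Z_\epsilon\in\Srp$ of $a_{E_L} - \epsilon Y_L|_{E_L}$. Setting $X_\epsilon := Z_\epsilon + \epsilon Y_L$ yields a PSD matrix that restricts to $a_{E_L}$ on $E_L$, so $X_\epsilon \in F_L$, and the identity $X_\epsilon - \epsilon Y_L = Z_\epsilon \succeq 0$ forces $\range Y_L \subseteq \range X_\epsilon \subseteq \range X_L^*$, the last step holding because $X_L^*$ has maximal rank in $F_L$. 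The main obstacle is the bookkeeping in the reduction to the $L$-block; chordality is invoked only once, to produce the auxiliary completion $Z_\epsilon$.
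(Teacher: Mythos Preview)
Your proof is correct and takes a genuinely different route from the paper's.

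The paper first treats the special case where $L$ is disconnected from $L^c$: it computes the projection $\PP(H)$ of $H:=\bigcap_\chi \face(F_\chi,\Snp)$, using chordality (via Theorem~\ref{thm:psd_comp}) to identify $\PP(\Snp)$ with the set of partial PSD matrices, and obtains $\PP(H)=\{b:b_\chi\in\face(a_\chi,\mathcal S^\chi_+)\ \forall\chi\}$. From this one reads off $a\in\ri\PP(H)=\PP(\ri H)$, hence $F\cap\ri H\neq\emptyset$, and an appeal to a minimal-face criterion gives $H=\face(F,\Snp)$. The general case is then reduced to the special one via Lemma~\ref{lem:max_rank_comp}. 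Your argument instead performs the $L$-block reduction first (again through Lemma~\ref{lem:max_rank_comp}), and then, for an arbitrary $Y\in H$, \emph{constructs} a witness $X_\epsilon\in F_L$ with $X_\epsilon\succeq \epsilon Y_L$ by completing the perturbed partial matrix $a_{E_L}-\epsilon Y_L|_{E_L}$; chordality is invoked at exactly this completion step. The payoff of your approach is that it is more elementary and self-contained: it bypasses the relative-interior calculus (\cite[Theorems~6.5,6.6]{con:70}) and the minimal-face proposition from \cite{PatakiSVW:99}, replacing them with a direct range-containment argument. The paper's approach, by contrast, makes the connection to the image geometry $\PP(H)$ and to Theorem~\ref{thm:face_red} more transparent. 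Both proofs share the same implicit nonemptiness assumption on $F$, which you correctly flag; one small point worth stating explicitly is that a single $\epsilon>0$ works simultaneously for all cliques because $\Theta$ is finite.
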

\begin{proof}
%First let $\widehat{\Theta}$ consist of all cliques (not necessarily maximal) in the restriction of $G$ to $L$. Observe that for any cliques $\chi$ and  $\zeta$ the implication
%$$\chi\subset\zeta\quad \Longrightarrow \quad F_{\zeta}\subset F_{\chi}, \qquad \textrm{ holds},$$
%Consequently, the equation
%$$\bigcap_{\chi \in\widehat{\Theta}} \face(F_{\chi},\Snp)=\bigcap_{\chi\in \Theta} \face(F_{\chi},\Snp), \qquad \textrm{ holds}.$$	
For brevity, set 
%***5***
$$
H = \bigcap_{\chi\in \Theta} \face(F_{\chi},\Snp).
$$
%Let $G_L=(V,E_L)$ be the graph obtained from $G$ by deleting all edges having an endpoint in $L^c$.
We first prove the theorem under the assumption that $L$ is disconnected from $L^c$. %Supposing that this is the case,
%for each clique $\chi$ in the restriction of $G$ to $L$, we will let $E_{\chi}$ denote the complete graph (with loops) on the vertices $\chi$. 
%We first prove the theorem under the assumption $L=V$. 
To this end, for each clique $\chi\in\Theta$, let $v_{\chi}\in \Ss^{\chi}_+$ denote the exposing vector of $\face(a_{\chi}$, $\Ss^{\chi}_+)$. Then by Theorem~\ref{thm:PSD_face}, we have
$$\face(F_{\chi},\Snp)=\Snp\cap (\PP^*_{\chi}v_{\chi})^{\perp}.$$
It is straightforward to see that $\PP^*_{\chi}v_{\chi}$ is simply the $n\times n$ matrix whose principal submatrix indexed by $\chi$ coincides with $v_{\chi}$ and whose all other entries are zero. Letting $Y[\chi]$ denote the principal submatrix indexed by $\chi$ of any matrix $Y\in\Snp$, we successively deduce 
\begin{align*}
\PP(H) &=\PP 
\Big(\{Y\succeq 0: Y[\chi]\in v^{\perp}_{\chi} \quad\forall \chi\in\Theta\} \Big)\\
&=\PP(\Snp)\cap \{b\in\R^E: b_\chi\in v^{\perp}_{\chi} \quad\forall \chi\in\Theta\}.
%&= \{\PP(A)\succeq \PP(\Snp): A[\chi;\chi]\in v^{\perp}_{\chi} \quad\forall \chi\in\Theta\}  \qquad\qquad\qquad\qquad \textrm{ [Easy calculation]} \\
\end{align*}
On the other hand, since the restriction of $G$ to $L$ is chordal and $L$ is disconnected from $L^c$, Theorem~\ref{thm:psd_comp} implies that $G$ is PSD completable. Hence we have 
the representation $\PP(\Snp)=\{b\in\R^E: b_{\chi}\in\mathcal{S}^{\chi}_+ \quad\forall \chi\in\Theta\}$. Combining this with the equations above, we obtain
\begin{align*}
\PP(H)&= \{b\in\R^E: b_\chi\in \mathcal{S}^{\chi}_+ \cap v^{\perp}_{\chi} \quad\forall \chi\in\Theta\}  \\
&= \{b\in\R^E: b_{\chi}\in \face(a_{\chi}, \Ss^{\chi}_+) \quad\forall \chi\in\Theta\}\\
&= \bigcap_{\chi\in \Theta}  \{b\in\R^E: b_{\chi}\in \face(a_{\chi}, \Ss^{\chi}_+) \},
\end{align*}
%\begin{align*}
%\PP\Big(\bigcap_{\chi\in \Theta} \face(F_{\chi},\Snp)\Big)&=\PP \Big(\Snp\cap  \bigcap_{\chi\in \Theta}  (\PP^*_{\chi}v_{\chi})^{\perp} \Big)\\
%&= \PP(\Snp)\cap \bigcap_{\chi\in \Theta}   (v_{\chi}^{\perp} \times \R^{\chi^c})  \qquad\qquad\qquad\qquad \textrm{ [Easy calculation]} \\
%&= \bigcap_{\chi\in \Theta}  (\Ss^{|\chi|}_+\times\R^{\chi^{c}}) \cap \bigcap_{\chi\in \Theta}  (v_{\chi}^{\perp} \times \R^{\chi^c}) \qquad [G \textrm{ is PSD completable]}  \\%
%&= \bigcap_{\chi\in \Theta}  (\Ss^{|\chi|}_+\cap v_{\chi}^{\perp}) \times \R^{\chi^{c}}\\
%&= \bigcap_{\chi\in \Theta}  \big(\face(a_{\chi}, \Ss^{|\chi|}_+)\big) \times \R^{\chi^{c}}.
%\end{align*}
Clearly $a$ lies in the relative interior of each set $\{b\in\R^E: b_{\chi}\in \face(a_{\chi}, \Ss^{\chi}_+) \}$. Using \cite[Theorems~6.5,6.6]{con:70}, we deduce
$$a\in\ri  \PP(H) =\PP(\ri H).$$ 
Thus the intersection $F\cap \ri H$ is nonempty. 
Taking into account that $F$ is contained in $H, \,$ 
and appealing to~\cite[Proposition~2.2(ii)]{PatakiSVW:99}, we conclude that $H$ 
is the minimal face of $\Snp$ containing $F$, as claimed.

%***9*** 
We now prove the theorem in full generality, that is when there may exist an edge joining $L$ and $L^c$. To this end,
let $\widehat{G}_L = (V, E_L)$ be 
the graph obtained from $G$ by deleting all edges adjacent to $L^c. \,$ 
Clearly, $L$ and $L^c$ are disconnected in $\widehat{G}_L.$ 
Applying the special case of the theorem that we have just proved, we deduce that in terms of the set  
\[
\begin{array}{rcl}
\widehat{F} & = & \{X\in \Snp : X_{ij} = a_{ij} \textrm{ for all } ij\in E_L\},
\end{array}
\]
we have 
$$\face(\widehat{F}, \Snp)=H.$$
The $X_\mu$ matrix of Lemma~\ref{lem:max_rank_comp} is a maximum rank PSD matrix in
$F, \,$ and also in $\widehat{F}.$ Since $F \subseteq \widehat{F}, \,$ 
we deduce $\face(F,\Snp)=\face(\widehat{F},\Snp)$, and this completes the proof.
\end{proof}

%\smallskip
%\begin{rem}[Relation to maximal cliques]
%{\rm
%It is easy to see that the exact analogue of Theorem~\ref{thm:cliquesuffPSD} holds with $\Theta$ consisting only of maximal cliques. This follows immediately by observing that for any cliques $\chi$ and  $\zeta$ the implication
%$$\chi\subset\zeta\quad \Longrightarrow \quad F_{\zeta}\subset F_{\chi}, \qquad \textrm{ holds},$$
%The equation
%$$\bigcap_{\chi \textrm{ a clique}} %\face(F_{\chi},\Snp)=\bigcap_{\chi \textrm{ a maximal clique}} \face(F_{\chi},\Snp).$$
%then follows.
%}
%\end{rem}

\smallskip

%In light of the previous Theorem \ref{thm:PSD_face}, it is natural to ask whether the Slater condition always holds for the PSD completion problem when the specified partial matrix is positive definite. For chordal graphs, it follows immediately from Theorem~\ref{thm:complpsd} that the answer is yes.
%\smallskip
%\begin{thm}[Slater condition for PSD completions \& chordal graphs]
%\label{thm:char_chord}
%The following are true.
%\begin{enumerate}
%\item
%Suppose that the graph induced by $G$ on $L$ is chordal.
%Then for any partial matrix $a\in\R^{E}$, the region
%$$\{X\in\Snp: X_{ij}=a_{ij} \textrm{ for all } ij\in E\}$$
%satisfies the Slater condition if, and only if, $a$ is a partial PD matrix.
%\item If the graph induced by $G$ on $L$ is not chordal, then there
%exists a partial PD matrix $a\in \R^{E}$ lying on the boundary of the
%set $\PP(\Snp)$ and so is \underline{not} PD completable.
%\end{enumerate}
%\end{thm}
%\smallskip%
%The first claim of the above theorem is immediate from Theorem~\ref{thm:psd_comp}. Rather than proving the second claim in full generality, we only illustrate the proof idea in 
%the following Example \ref{ex:n4complbndr}. An interested reader can easily work out the extension to the general case.
%\smallskip

\smallskip
\begin{remark}[Finding maximal cliques on chordal graphs]
{\rm 
In light of the theorem above, it noteworthy that finding maximal cliques on chordal graphs is polynomially solvable; see e.g. \cite{Laurent:00} or more generally \cite{poly_solve_chord}.
}
\end{remark}

\smallskip
The following is an immediate consequence.
\begin{corollary}[Singularity degree of chordal completions]\label{cor:sing_psd}{\hfill \\ }
	If the restriction of $G$ to $L$ is chordal, then the PSD completion problem has singularity degree at most one.  	
\end{corollary}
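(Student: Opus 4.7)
\noindent
My plan is to chain together Theorems~\ref{thm:cliquesuffPSD}, \ref{thm:PSD_face}, and \ref{thm:face_red}: the first writes the minimal face as an intersection of clique faces, the second shows each clique face is exposed, and the third translates exposedness into a singularity-degree statement.

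More precisely, under the chordality hypothesis, Theorem~\ref{thm:cliquesuffPSD} gives
$$\face(F,\Snp)=\bigcap_{\chi\in\Theta}\face(F_\chi,\Snp),$$
where $\Theta$ is the set of maximal cliques in the restriction of $G$ to $L$. Theorem~\ref{thm:PSD_face} asserts that for each $\chi\in\Theta$, the face $\face(F_\chi,\Snp)$ is \emph{exposed} in $\Snp$, with exposing vector $\PP^*_\chi v_\chi\in\Snp$, where $v_\chi\in\Ss^\chi_+$ exposes $\face(a_\chi,\Ss^\chi_+)$. Since the exposing vectors all live in the self-dual cone $\Snp$, their sum $W:=\sum_{\chi\in\Theta}\PP^*_\chi v_\chi$ also lies in $\Snp$, and a routine verification shows $\face(F,\Snp)=\Snp\cap W^\perp$. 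Thus $\face(F,\Snp)$ is an exposed face of $\Snp$.

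To finish, I invoke Theorem~\ref{thm:face_red} with $C=\Snp$, $\MM=\PP$, and $b=a$. That theorem says the singularity degree is at most one precisely when the minimal face $\face(a,\PP(\Snp))$ is exposed, and part~\ref{claim:3_in} of the theorem supplies the equivalence: $v$ exposes $\face(a,\PP(\Snp))$ if and only if $\PP^*v$ exposes $\face(F,\Snp)$. Applied to the vector $W$ above (or more directly, to any $v\in\R^E$ with $\PP^*v=W$, which exists since each $\PP^*_\chi v_\chi$ lies in $\range\PP^*$), we conclude that $\face(a,\PP(\Snp))$ is itself exposed, so the singularity degree is at most one.

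The only mildly subtle point I anticipate is the claim that an intersection of exposed faces is exposed via the sum of exposing vectors, and the bookkeeping that the summed exposing vector indeed lies in $\range\PP^*$ so that part~\ref{claim:3_in} of Theorem~\ref{thm:face_red} applies verbatim; both are immediate once stated, and the rest of the argument is a direct concatenation of the cited results.
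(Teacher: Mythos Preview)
Your proposal is correct and follows essentially the same approach as the paper: both use Theorem~\ref{thm:cliquesuffPSD} to write $\face(F,\Snp)$ as an intersection of clique faces, then observe that the sum $W=\sum_{\chi\in\Theta}\PP^*_\chi v_\chi$ of the exposing vectors from Theorem~\ref{thm:PSD_face} exposes this intersection. The paper finishes by noting directly that $W$ (equivalently, the $v\in\R^E$ with $\PP^*v=W$) is feasible for the first auxiliary system, whereas you route through part~\ref{claim:3_in} of Theorem~\ref{thm:face_red} to conclude that $\face(a,\PP(\Snp))$ is exposed; these are two phrasings of the same fact.
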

\begin{proof}
In the notation of Theorem~\ref{thm:cliquesuffPSD}, the sum
$Y:=\sum_{\chi\in\Theta} \PP^*_{\chi}v_{\chi}$ exposes the minimal face $\face(F,\mathcal{S}^+_n)$. If the Slater condition fails, then $Y$ is feasible for the first auxiliary problem in the facial reduction sequence. 
\end{proof}

\smallskip
\begin{example}[Finding the minimal face on chordal graphs]
{\rm
Let $\Omega$ consist of all matrices $X\in\mathcal{S}^4_+$ solving the PSD completion problem 
$$\begin{bmatrix}[r]
1 & 1 & ? & ? \\
1 & 1 & 1 & ? \\
? & 1 & 1 & -1 \\
? & ? & -1 & 2 \\
\end{bmatrix}
.$$
There are three nontrivial cliques in the graph. Observe that the minimal face of $\mathcal{S}^2_+$ containing the matrix 
$$\begin{bmatrix}
1 & 1\\
1 & 1\\
\end{bmatrix}
= \begin{bmatrix}[r]
-\frac{1}{2} & \frac{1}{2}\\
\frac{1}{2} & \frac{1}{2}\\
\end{bmatrix} \begin{bmatrix}[r]
0 & 0\\
0 & 4\\
\end{bmatrix} \begin{bmatrix}[r]
-\frac{1}{2} & \frac{1}{2}\\
\frac{1}{2} & \frac{1}{2}\\
\end{bmatrix} 
$$
is exposed by  
$$\begin{bmatrix}[r]
-\frac{1}{2} & \frac{1}{2}\\
\frac{1}{2} & \frac{1}{2}\\
\end{bmatrix} \begin{bmatrix}[r]
4 & 0\\
0 & 0\\
\end{bmatrix} \begin{bmatrix}[r]
-\frac{1}{2} & \frac{1}{2}\\
\frac{1}{2} & \frac{1}{2}\\
\end{bmatrix} 
= \begin{bmatrix}[r]
1 & -1\\
-1 & 1\\
\end{bmatrix}
.$$
Moreover, the matrix $\begin{bmatrix}
	1 & -1\\
	-1 & \,\,\,2\\
\end{bmatrix}$
is definite and hence the minimal face of $\mathcal{S}^2_+$ containing this matrix is exposed by the all-zero matrix.

Classically, an intersection of exposed faces is exposed by the sum of their exposing vectors. Using Theorem~\ref{thm:cliquesuffPSD}, we deduce that the minimal face of $\mathcal{S}^4_+$ containing $\Omega$ is the one exposed by the sum
$$
\begin{bmatrix}[r]
1 & -1 & 0 &0\\
-1 & 1 & 0 &0\\
0 & 0 & 0 &0\\
0 & 0 & 0 &0\\
\end{bmatrix}
+\begin{bmatrix}[r]
0 & 0 & 0 &0\\
0 & 1 & -1 & 0 \\
0 & -1 & 1 & 0 \\
0 & 0 & 0 &0\\
\end{bmatrix}=\begin{bmatrix}[r]
1 & -1 & 0 & 0 \\
-1 & 2 & -1 & 0 \\
0 & -1 & 1 &0\\
0 & 0 & 0 &0\\
\end{bmatrix}.
$$
Diagonalizing this matrix, we obtain
$$\face(\Omega, \mathcal{S}^4_+)= \begin{bmatrix}[r]
0 & 1  \\
0 & 1  \\
0 & 1  \\
3 & 0  \\
\end{bmatrix} 
\mathcal{S}^2_+
\begin{bmatrix}[r]
0 & 1 \\
0 & 1  \\
0 & 1  \\
3 & 0  \\
\end{bmatrix}^T.
$$
}
\end{example}

\smallskip

We now turn to an analogous development for the EDM completion problem. To this end, recall from \eqref{eq:isomorph} that the mapping $\KK\colon\Sn\to\Sn$ restricts to an isomorphism $\KK\colon \Ss_c\to \mathcal{S}_H$ carrying $\Ss_c\cap \Snp$
 onto $\mathcal{E}^n$. Moreover, it turns out that the Moore-Penrose pseudoinverse $\KK^{\dag}$ restricts to the inverse of this isomorphism $\KK^{\dag}\colon \mathcal{S}_H\to\mathcal{S}_c$. As a result, it is convenient to study the faces of $\EE^n$ using the faces of $\Ss_c\cap \Snp$. This is elucidated by the following standard result.
\smallskip
\begin{lemma}[Faces under isomorphism]\label{lem:face_iso} {\hfill \\ }
Consider a linear isomorphism $\mathcal{M}\colon\E\to\Y$ between linear spaces $\E$ and $\Y$, and let $C\subset \E$ be a closed convex cone. Then the following are true
\begin{enumerate}
\item $F\unlhd C  \quad \Longleftrightarrow \quad \mathcal{M}F\unlhd \mathcal{M}C$.    
\item $(\MM C)^{*}=(\MM^{-1})^*C^*.$
\item For any face $F\unlhd C$, we have $(\MM F)^{\triangle}= (\MM^{-1})^*F^{\triangle}$.
\end{enumerate}
\end{lemma}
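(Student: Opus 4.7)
My plan is to handle the three claims in order, each following from a short direct argument using that $\MM$ is a linear isomorphism. None of these steps should be hard; the main thing to be careful about is keeping the adjoint bookkeeping straight in parts (2) and (3), since $(\MM^{-1})^{*}=(\MM^{*})^{-1}$ but the two notations can obscure what is acting on what.

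For claim (1), the plan is to use the characterization of a face directly. Let $F$ be a face of $C$, and suppose $x,y\in \MM C$ satisfy $\tfrac{1}{2}x+\tfrac{1}{2}y\in \MM F$. Write $x=\MM u$, $y=\MM v$ with $u,v\in C$. Since $\MM$ is an isomorphism, $\tfrac{1}{2}u+\tfrac{1}{2}v\in F$, and so by the face property applied to $C$ we get $u,v\in F$, hence $x,y\in \MM F$. Thus $\MM F\unlhd \MM C$. The converse follows by symmetry, applying the same argument to $\MM^{-1}\colon \MM C\to C$.

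For claim (2), the plan is a one-line adjoint computation: for any $y\in \Y$,
\[
y\in (\MM C)^{*} \iff \langle y,\MM x\rangle\geq 0 \text{ for all } x\in C \iff \MM^{*}y\in C^{*},
\]
which says exactly $y\in (\MM^{*})^{-1}C^{*}=(\MM^{-1})^{*}C^{*}$.

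For claim (3), the idea is to combine claim (2) with the analogous orthogonal-complement identity. Concretely, $(\MM F)^{\perp}=(\MM^{-1})^{*}F^{\perp}$, since $\langle y,\MM x\rangle =\langle \MM^{*}y,x\rangle$ for all $x$. Then
\[
(\MM F)^{\triangle}=(\MM C)^{*}\cap (\MM F)^{\perp}=(\MM^{-1})^{*}C^{*}\cap (\MM^{-1})^{*}F^{\perp}=(\MM^{-1})^{*}(C^{*}\cap F^{\perp})=(\MM^{-1})^{*}F^{\triangle},
\]
where the third equality is because $(\MM^{-1})^{*}$ is itself a bijection, hence commutes with intersection. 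The only potential pitfall is remembering that $(\MM^{-1})^{*}$ being an isomorphism justifies pulling it outside the intersection; this is immediate and should pose no real obstacle.
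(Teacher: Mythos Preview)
Your proof is correct. The paper does not supply a proof of this lemma at all; it simply labels it a ``standard result'' and moves on. Your direct verification of each item---the face definition for (1), the adjoint computation for (2), and combining (2) with the analogous identity $(\MM F)^{\perp}=(\MM^{-1})^{*}F^{\perp}$ for (3)---is exactly the kind of routine argument the authors had in mind, and nothing is missing.
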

\smallskip  
 
%It is natural to study the faces of $\EE^n$ using the mapping $\KK$.
%And from \eqref{rangeK*}, \eqref{eq:centholl}, \eqref{eq:isomorph}, this
%task is reduced to studying faces of the intersection $\Ss_c\cap \Snp$. 
In turn, it is easy to see that $\Ss_c\cap\Snp$ is a face of $\Snp$ isomorphic to $\Ss^{n-1}_+$. More specifically for any $n\times n$ orthogonal matrix $ \begin{bmatrix}
\frac{1}{\sqrt n}e & U  \cr
\end{bmatrix}$,
we have the representation
\begin{equation*}
\mathcal{S}_c\cap \Snp= U\mathcal{S}^{n-1}_+U
\end{equation*}
Consequently, with respect to the ambient space $\mathcal{S}_c$, the cone $\mathcal{S}_c\cap \Snp$ is self-dual and for any face $F\unlhd \mathcal{S}^{n-1}_+$ we have
$$ UFU^T\unlhd \mathcal{S}_c\cap \Snp \qquad \textrm{ and } \qquad (UFU^T)^{\triangle} =UF^{\triangle}U^T.$$
%Reassuringly,  one can easily check that $\Ss_c\cap \Snp$ is a 
%face of the positive semi-definite cone $\Snp$ exposed by the matrix of
%ones. 
%Moreover, we have $\linspan (\psd{n} \cap \Ss_c) = \Ss_c$.
%Consequently, $\EE^n$ is full-dimensional within $\Ss_H$. 
%\index{Moore-Penrose pseudoinverse, $\KK^\dagger$}
%\index{$\KK^\dagger$, Moore-Penrose pseudoinverse}

As a result of these observations, we make the following important convention:
the \textdef{ambient spaces} of $\Ss_c\cap \Snp$  and of $\EE^n$ will always be taken as $\Ss_c$ and $\Ss_H$, respectively. Thus the facial conjugacy operations of these two cones will always be taken with respect to these ambient spaces and {\em not} with respect to the entire $\Ss^n$. 

Given a clique $\chi$ in $G$, we let $\mathcal{E}^{\chi}$ denote the set of $|\chi|\times |\chi|$ Euclidean distance matrices indexed by $\chi$. In what follows, given a partial matrix $a\in\R^E$, the restriction $a_{\chi}$ can then be thought of either as a vector in $\R^{E(\chi)}$  or as a hollow matrix in $\mathcal{S}^{\chi}$.
We will also use the symbol $\KK_{\chi}\colon \mathcal{S}^{\chi}\to\mathcal{S}^{\chi}$ to indicate the mapping $\KK$ acting on $\mathcal{S}^{\chi}$.

%Under this convention, for example, both $\Ss_c\cap \Snp$ and $\mathcal{E}^n$ are self-dual, as the following lemma shows. We omit the proof since it is standard.
\smallskip
\begin{thm}[Clique facial reduction for EDM completions]\label{thm:EDM_face}
Let $\chi$ be any $k$-clique in the graph $G$.
%Permuting the vertices, if necessary, we may suppose 
%$\chi=\{i: 1\leq i\leq r\}$.  
Let $a\in\R^{E}$ be a partial Euclidean distance matrix and define 
$$
\begin{array}{rcl}
F_{\chi} &:= &
%%\KK^\dagger (\EE^n(\chi,a)) \\&:=& 
%\KK^\dagger \left(
%    \{D\in \EE^n:  D_{ij}=a_{ij} \textrm{ for all } ij\in %\chi\}
%           \right)
% \\&=& 
\{X\in \Snp\cap \mathcal{S}_c:  [\KK(X)]_{ij}=a_{ij} \textrm{ for
all } ij\in E(\chi)\}
\end{array}
$$
Then for any matrix $v_\chi$ exposing $\face\big(\KK^{\dag}(a_{\chi}),\Ss^{\chi}_+\cap \mathcal{S}_c\big)$, the matrix
$$
%\begin{bmatrix}
%V & 0\\
%0 & 0
%\end{bmatrix}
\PP^{*}_{\chi}v_{\chi}\quad \textrm{ exposes }\quad \face(F,\Snp \cap \mathcal{S}_c).
$$
%\index{$e$, vector of ones}
%\index{vector of ones, $e$}
\end{thm}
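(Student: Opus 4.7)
The plan is to apply Theorem~\ref{thm:face_red} to an appropriate transport of the problem from the EDM side to the PSD side, and then translate the resulting exposing vector back. I will take $C:=\Snp\cap\mathcal{S}_c$, regarded as a proper cone in its ambient space $\mathcal{S}_c$, and define the linear map $\mathcal{M}\colon\mathcal{S}_c\to\mathcal{S}_c^\chi$ by
\[
\mathcal{M}(X):=\KK_\chi^{\dagger}(\PP_\chi(\KK(X))),
\]
with right-hand side $b:=\KK^{\dagger}(a_\chi)$. The feasible region of interest is then $F_\chi=\{X\in C:\mathcal{M}(X)=b\}$, since $[\KK(X)]_{ij}=a_{ij}$ for $ij\in E(\chi)$ is clearly equivalent to $\mathcal{M}(X)=b$.

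The first substantive step is a clean description of $\mathcal{M}$ and its image. The key structural observation is the clique identity
\[
\PP_\chi(\KK(X))=\KK_\chi(\PP_\chi X),
\]
which holds because $[\KK(X)]_{ij}=X_{ii}+X_{jj}-2X_{ij}$ only involves entries of $X$ at positions $\chi\times\chi$, and every such position lies in $E(\chi)$ since $\chi$ is a clique. Combined with $\KK_\chi^{\dagger}\KK_\chi=\Pi_{\mathcal{S}_c^\chi}$, this rewrites $\mathcal{M}(X)=\Pi_{\mathcal{S}_c^\chi}(\PP_\chi X)$. Chaining the surjections $\KK(\mathcal{S}_c\cap\Snp)=\EE^n$ from~\eqref{eq:isomorph}, $\PP_\chi(\EE^n)=\EE^\chi$ (by extending any $|\chi|$ points realizing a member of $\EE^\chi$ with arbitrary additional points to an $n$-point realization), and the isomorphism $\KK_\chi^{\dagger}(\EE^\chi)=\mathcal{S}^\chi_+\cap\mathcal{S}_c^\chi$ then identifies $\mathcal{M}(C)=\mathcal{S}^\chi_+\cap\mathcal{S}_c^\chi$.

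With these identifications in hand, Theorem~\ref{thm:face_red}(\ref{claim:3_in}) applied to $C$, $\mathcal{M}$, and $b$ immediately yields the equivalence: $v_\chi\in\mathcal{S}_c^\chi$ exposes $\face(\KK^{\dagger}(a_\chi),\mathcal{S}^\chi_+\cap\mathcal{S}_c^\chi)$ if and only if $\mathcal{M}^*v_\chi$ exposes $\face(F_\chi,C)$. The remaining task is to identify $\mathcal{M}^*v_\chi$ with $\PP_\chi^*v_\chi$. Using self-adjointness of the orthogonal projection $\Pi_{\mathcal{S}_c^\chi}$ together with $v_\chi\in\mathcal{S}_c^\chi$, one computes
\[
\langle \mathcal{M}(X),v_\chi\rangle=\langle \Pi_{\mathcal{S}_c^\chi}(\PP_\chi X),v_\chi\rangle=\langle \PP_\chi X,v_\chi\rangle=\langle X,\PP_\chi^*v_\chi\rangle,
\]
and the condition $v_\chi e=0$ forces $\PP_\chi^*v_\chi\in\mathcal{S}_c$, so $\mathcal{M}^*v_\chi=\PP_\chi^*v_\chi$ exactly.

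The only step that could reasonably be called an obstacle is recognizing the clique identity $\PP_\chi\circ\KK=\KK_\chi\circ\PP_\chi$; once this is in place the proof reduces, via the isomorphism $\KK_\chi$, to essentially the PSD clique reduction already handled in Theorem~\ref{thm:PSD_face}. The remaining bookkeeping---checking that $\PP_\chi^*v_\chi$ is centered and PSD (hence lies in the self-dual cone $(\Snp\cap\mathcal{S}_c)^*$ inside $\mathcal{S}_c$) and that the adjoint simplifies as above---is routine.
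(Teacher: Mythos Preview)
Your proof is correct and follows essentially the same route as the paper's: both apply Theorem~\ref{thm:face_red} with $C=\Snp\cap\mathcal{S}_c$ and simplify the resulting adjoint via the commutation $\PP_\chi\circ\KK=\KK_\chi\circ\PP_\chi$ together with $\KK_\chi^{\dagger}\KK_\chi=\Pi_{\mathcal{S}_c^\chi}$ and $v_\chi\in\mathcal{S}_c^\chi$; the only cosmetic difference is that the paper chooses $\mathcal{M}=\PP_\chi\circ\KK$ (so $\mathcal{M}(C)=\mathcal{E}^\chi$) and invokes Lemma~\ref{lem:face_iso} to transport the exposing vector through the isomorphism $\KK_\chi$, whereas you absorb $\KK_\chi^{\dagger}$ into $\mathcal{M}$ from the outset. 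One small slip worth fixing: in the EDM setting $L=\emptyset$, so the diagonal positions $(i,i)$ do \emph{not} lie in $E(\chi)$; the clique identity is valid because $\PP_\chi$ must be read here as the full $\chi\times\chi$ block restriction---which is exactly the reading forced by applying $\PP_\chi^{*}$ to the non-hollow matrix $v_\chi$ in the theorem statement.
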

\begin{proof}
The proof proceeds by applying Theorem~\ref{thm:face_red} with
\[
C:= \Snp \cap \mathcal{S}_c,\quad \MM:= P_{\chi}\circ \KK, \quad b:=a_\chi.
\]
To this end, first observe
$\MM(C)=(P_{\chi}\circ \KK)(\Snp\cap\mathcal{S}_c)=\mathcal{E}^{\chi}$.
By Lemma~\ref{lem:face_iso}, the matrix $\KK^{\dag*}_{\chi}(v_{\chi})$ exposes $\face(a_{\chi}, \mathcal{E}^{\chi})$.
Thus the minimal face of $\Snp\cap\mathcal{S}_c$ containing $F$ is the one exposed by the matrix 
$$(P_{\chi}\circ\KK)^*(\KK^{\dag*}_{\chi}(v_{\chi}))=\KK^*P_{\chi}^*\KK_{\chi}^{\dag*}(v_{\chi})=P_{\chi}^* \KK_{\chi}^*\KK^{\dag*}_{\chi}(v_{\chi})=P_{\chi}^*v_{\chi}.
$$
The result follows.
\end{proof}

\smallskip
\begin{thm}[Clique facial reduction for EDM is sufficient]\label{thm:fin_term}
\label{thm:EDM_facesuff}
Suppose that $G$ is chordal, and consider a partial Euclidean distance matrix $a\in \R^{E}$ and the region
\[
F:=\{X\in \Ss_c\cap \Snp : [\KK(X)]_{ij} = a_{ij} \textrm{ for all } ij\in E\}.
\]
Let $\Theta$ denote the set of all maximal cliques in $G$, and for each $\chi \in \Theta$ define
\[
F_{\chi}:=\{X\in \Ss_c\cap \Snp : [\KK(X)]_{ij} = a_{ij} \textrm{ for all } ij\in E(\chi)\}.
\]
Then the equality $$\face(F, \Ss_c\cap\Snp)=\bigcap_{\chi\in \Theta} \face(F_{\chi},\Ss_c\cap\Snp)\qquad \textrm{ holds}.$$ 
\end{thm}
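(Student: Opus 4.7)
The plan is to adapt the strategy used in the proof of Theorem~\ref{thm:cliquesuffPSD} to the EDM setting, with the projection $\PP$ replaced by the composition $\PP \circ \KK$ and with Theorem~\ref{thm:psd_comp} replaced by Theorem~\ref{thm:compledm}. Write $H := \bigcap_{\chi \in \Theta} \face(F_\chi, \Ss_c \cap \Snp)$. Since each clique face $\face(F_\chi, \Ss_c \cap \Snp)$ contains $F$, the intersection $H$ also contains $F$, and the inclusion $\face(F, \Ss_c \cap \Snp) \subseteq H$ is automatic. The task is therefore to prove the reverse inclusion, and the standard device is to exhibit a point of $F$ lying in the relative interior of $H$ and then invoke \cite[Proposition~2.2(ii)]{PatakiSVW:99}.

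First I would compute the image $(\PP \circ \KK)(H)$ under the linear map $\PP \circ \KK$, which carries $\Ss_c \cap \Snp$ onto $\PP(\EE^n)$. Applying claim~\ref{claim:1} of Theorem~\ref{thm:face_red} to $\MM = \PP_\chi \circ \KK$ acting on $C = \Ss_c \cap \Snp$ identifies each face $\face(F_\chi, \Ss_c \cap \Snp)$ with $(\Ss_c \cap \Snp) \cap (\PP_\chi \circ \KK)^{-1}\bigl(\face(a_\chi, \EE^\chi)\bigr)$. Consequently $X \in H$ if and only if $[\KK(X)]_\chi \in \face(a_\chi, \EE^\chi)$ for every maximal clique $\chi$, which yields
\begin{equation*}
(\PP \circ \KK)(H) \subseteq \bigl\{b \in \R^E : b_\chi \in \face(a_\chi, \EE^\chi) \text{ for every } \chi \in \Theta\bigr\}.
\end{equation*}
For the reverse inclusion, any $b$ on the right-hand side is a partial EDM (each clique restriction $b_\chi$ lies in $\EE^\chi$), and since $G$ is chordal, Theorem~\ref{thm:compledm} produces an EDM completion $\KK(X)$ of $b$ with $X \in \Ss_c \cap \Snp$; this $X$ automatically lies in $H$, so equality holds.

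Next I would show that $a$ lies in the relative interior of $(\PP \circ \KK)(H)$. By definition of the minimal face, each restriction $a_\chi$ belongs to $\ri \face(a_\chi, \EE^\chi)$, and therefore $a$ is contained in the intersection of the relative interiors of the sets $\{b \in \R^E : b_\chi \in \face(a_\chi, \EE^\chi)\}$. Invoking \cite[Theorem~6.5]{con:70} on intersections of relative interiors places $a$ in $\ri(\PP \circ \KK)(H)$, and applying \cite[Theorem~6.6]{con:70} to the linear map $\PP \circ \KK$ yields $\ri(\PP \circ \KK)(H) = (\PP \circ \KK)(\ri H)$. Hence there exists $X \in \ri H$ with $(\PP \circ \KK)(X) = a$, which produces a point of $F \cap \ri H$. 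Since $F \subseteq H$, \cite[Proposition~2.2(ii)]{PatakiSVW:99} concludes that $H = \face(F, \Ss_c \cap \Snp)$, as required.

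The main obstacle I anticipate is bookkeeping around ambient spaces: the face and relative interior operations for $\Ss_c \cap \Snp$ and $\EE^\chi$ must be taken in the ambient spaces $\Ss_c$ and $\Ss_H$ respectively, as emphasized by the convention preceding Theorem~\ref{thm:EDM_face}, and $\KK$ must be used in its guise as a linear isomorphism between these ambient spaces, as recorded in \eqref{eq:isomorph}. Once this convention is consistently maintained, each step above is essentially a transposition of the PSD argument through the isomorphism $\KK$; the chordality hypothesis enters only through Theorem~\ref{thm:compledm} when completing the auxiliary partial EDM $b$.
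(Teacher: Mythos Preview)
Your proposal is correct and follows exactly the route the paper intends: the paper's proof simply states that it ``follows entirely along the same lines as the first part of the proof of Theorem~\ref{thm:cliquesuffPSD},'' and your argument carries out precisely that adaptation, replacing $\PP$ by $\PP\circ\KK$ and Theorem~\ref{thm:psd_comp} by Theorem~\ref{thm:compledm}. The only cosmetic difference is that you identify each $\face(F_\chi,\Ss_c\cap\Snp)$ via claim~\ref{claim:1} of Theorem~\ref{thm:face_red} rather than via the exposing vectors of Theorem~\ref{thm:EDM_face}, but the two descriptions are equivalent and the remainder of the argument is identical.
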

\begin{proof}
The proof follows entirely along the same lines as the first part of the proof of Theorem~\ref{thm:cliquesuffPSD}. We omit the details for the sake of brevity.
\end{proof}

\smallskip

\begin{corollary}[Singularity degree of chordal completions]\label{cor:sing}
If the graph $G=(V,E)$ is chordal, then the EDM completion problem has singularity degree at most one, when feasible.  	
\end{corollary}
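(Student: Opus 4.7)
The plan is to imitate the proof of Corollary~\ref{cor:sing_psd} verbatim, replacing the PSD facial reduction ingredients with their EDM analogues. Concretely, I will work with the EDM feasibility problem phrased, via the isomorphism in \eqref{eq:isomorph}, as a conic feasibility problem over $\mathcal{S}_c\cap\Snp$:
\[
F=\{X\in \mathcal{S}_c\cap\Snp : [\KK(X)]_{ij}=a_{ij}\text{ for all }ij\in E\},
\]
and apply facial reduction inside the ambient space $\mathcal{S}_c$.

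First, for each maximal clique $\chi$ of $G$, I invoke Theorem~\ref{thm:EDM_face} to produce a matrix $v_\chi$ that exposes the minimal face $\face\bigl(\KK^{\dag}(a_\chi),\mathcal{S}^\chi_+\cap\mathcal{S}_c\bigr)$ and whose lift $\PP^*_\chi v_\chi$ exposes $\face(F_\chi,\mathcal{S}_c\cap\Snp)$, where $F_\chi$ is the single-clique relaxation defined in Theorem~\ref{thm:fin_term}. Then I appeal to the chordality hypothesis together with Theorem~\ref{thm:fin_term} to write
\[
\face(F,\mathcal{S}_c\cap\Snp)=\bigcap_{\chi\in\Theta}\face(F_\chi,\mathcal{S}_c\cap\Snp).
\]
Since an intersection of exposed faces of a closed convex cone is exposed by the sum of the individual exposing vectors, the single matrix
\[
Y:=\sum_{\chi\in\Theta}\PP^*_\chi v_\chi
\]
exposes $\face(F,\mathcal{S}_c\cap\Snp)$ in $\mathcal{S}_c\cap\Snp$.

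To conclude, assume the Slater condition fails (otherwise the singularity degree is zero). By Theorem~\ref{thm:face_red}, any vector exposing a proper face of the image cone $(\PP\circ\KK)(\mathcal{S}_c\cap\Snp)=\PP(\EE^n)$ that contains $a$ solves the auxiliary system, and the minimal-face exposer $Y$ produced above is precisely such a certificate. Thus a single facial reduction step already cuts $\mathcal{S}_c\cap\Snp$ down to $\face(F,\mathcal{S}_c\cap\Snp)$, after which the reduced problem is strictly feasible by construction. Therefore the singularity degree is at most one.

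The only step that requires any care is the claim that the sum of individual clique exposing vectors still exposes the intersection when the ambient space of the cone is $\mathcal{S}_c$ rather than $\Sn$; this is immediate from the definition of an exposing vector, so I expect no real obstacle. The bulk of the work has been done in Theorems~\ref{thm:EDM_face} and \ref{thm:fin_term}, and the corollary essentially just packages those facts into the language of facial reduction singularity degree.
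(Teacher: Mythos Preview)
Your argument is correct and follows exactly the approach the paper intends: the paper does not write out a separate proof of Corollary~\ref{cor:sing}, relying instead on the evident analogy with Corollary~\ref{cor:sing_psd}, and your write-up supplies precisely those details by combining Theorems~\ref{thm:EDM_face} and~\ref{thm:fin_term} in the same way.
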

\smallskip

\section*{Conclusion}
In this manuscript, we considered properties of the coordinate shadows of the PSD and EDM cones: $\PP(\mathcal{S}^n_+)$ and $\PP(\mathcal{E})$. We characterized when these sets are closed, related their boundary structure to a facial reduction algorithm of \cite{Nathan_Henry}, and explained that the nonexposed faces of these sets directly impact the complexity of the facial reduction algorithm.
In particular, under a chordality assumption, the ``minimal face'' of the feasible region admits a combinatorial description, the singularity degree of the completion problems are at most one, and the coordinate shadows, $\PP(\mathcal{S}^n_+)$ and $\PP(\mathcal{E})$, are facially exposed. 
This brings up an intriguing follow-up research agenda:
\bigskip

\begin{changemargin}{0.5cm}{0.5cm} Classify graphs $G$ for which the images $\PP(\Snp)$ and $\PP(\EE)$ are facially exposed, or equivalently those for which the corresponding completion formulations have singularity degree at most one irrespective of the known matrix entries.
\end{changemargin}

\bigskip
\section*{Acknowledgments}
\label{sect:ack}
We thank the anonymous referees for their insightful comments. We are indebted to one of the referees for pointing out the elementary proofs of Theorems~\ref{thm:maincompl_fixed} and \ref{thm:close_EDM_fixed} presented in the beginning of Section~\ref{sect:closedness}.

%%%\end{document}

\newpage
%\printindex
%\appendix
\bibliographystyle{plain}
\bibliography{master,psd,edm}

\end{document}